\newtheorem{theorem}{Theorem}[section]
\newtheorem*{maintheorem}{Theorem}
\newtheorem{lemma}[theorem]{Lemma}
\newtheorem{proposition}[theorem]{Proposition}
\newtheorem{corollary}[theorem]{Corollary}
\theoremstyle{definition}
\theoremstyle{remark}
\newtheorem{remark}[theorem]{Remark}
\numberwithin{equation}{section}
\newcommand{\uc}[1]{\ensuremath \overset{#1}{\circ}}
\newcommand{\udc}[2]{\ensuremath \underset{#2}{\overset{#1}{\circ}}}
\newcommand{\lineqv}{\ensuremath \sim}
\newcommand{\abs}[1]{\ensuremath \lvert #1 \rvert}
\DeclareMathOperator{\pic}{Pic}
\DeclareMathOperator{\Defq}{Def^{\text{QG}}}
\def\sheaf#1{\ensuremath \mathcal#1}
\begin{document}

\title[A numerical Campedelli surface with an involution]{A simply connected numerical Campedelli surface \\ with an involution}

\author[H. Park]{Heesang Park}

\address{School of Mathematics, Korea Institute for Advanced Study, Seoul 130-722, Korea}

\email{hspark@kias.re.kr}

\author[D. Shin]{Dongsoo Shin}

\address{Department of Mathematics, Chungnam National University, Daejeon 305-764, Korea \& School of Mathematics, Korea Institute for Advanced Study, Seoul 130-722, Korea}

\email{dsshin@cnu.ac.kr}

\author[G. Urz\'ua]{Giancarlo Urz\'ua}

\address{Facultad de Matem\'aticas, Pontificia Universidad Cat\'olica de Chile, Santiago, Chile.}

\email{urzua@mat.puc.cl}

\date{December 31, 2011; Revised at October 03, 2012}

\subjclass[2000]{14J29, 14J10, 14J17, 53D05}

\keywords{surface of general type, involution, $\mathbb{Q}$-Gorenstein smoothing}

\begin{abstract}
We construct a simply connected minimal complex surface of general type with $p_g=0$ and $K^2=2$ which has an involution such that the minimal resolution of the quotient by the involution is a simply connected minimal complex surface of general type with $p_g=0$ and $K^2=1$. In order to construct the example, we combine a double covering and $\mathbb{Q}$-Gorenstein deformation. Especially, we develop a method for proving unobstructedness for deformations of a singular surface by generalizing a result of Burns and Wahl which characterizes the space of first order deformations of a singular surface with only rational double points. We describe the stable model in the sense of Koll\'ar and Shepherd-Barron of the singular surfaces used for constructing the example. We count the dimension of the invariant part of the deformation space of the example under the induced $\mathbb{Z}/2\mathbb{Z}$-action.
\end{abstract}

\maketitle

%\tableofcontents

\section{Introduction}

One of the fundamental problems in the classification of complex
surfaces is to find a new family of complex surfaces of general
type with $p_g=0$. In this paper we construct new simply connected
\emph{numerical Campedelli surfaces} with an involution, i.e.
simply connected minimal complex surfaces of general type with
$p_g=0$ and $K^2=2$, that have an automorphism of order $2$.

There has been a growing interest for complex surfaces of general
type with $p_g=0$ having an involution; cf.~J. Keum-Y.
Lee~\cite{Keum-Lee}, Calabri-Ciliberto-Mendes
Lopes~\cite{Calabri-Ciliberto-Lopes}, Calabri-Mendes
Lopes-Pardini~\cite{Calabri-Lopes-Pardini}, Y. Lee-Y.
Shin~\cite{Lee-Shin}, Rito~\cite{Rito}. A classification of
\emph{numerical Godeaux surfaces} (i.e. minimal complex surfaces
of general type with $p_g=0$ and $K^2=1$) with an involution is
given in Calabri-Ciliberto-Mendes
Lopes~\cite{Calabri-Ciliberto-Lopes}. It is known that the
quotient surface of a numerical Godeaux surface by its involution
is either rational or birational to an Enriques surface, and the
bicanonical map of the numerical Godeaux surface factors through
the quotient map.

However, the situation is more involved in the case of numerical
Campedelli surfaces, because the bicanonical map may not factor
through the quotient map; cf.~Calabri-Mendes
Lopes-Pardini~\cite{Calabri-Lopes-Pardini}. In particular it can
happen that the quotient is of general type. More precisely, let
$X$ be a numerical Campedelli surface with an involution $\sigma$.
If $\sigma$ has only fixed points and no fixed divisors, then the
minimal resolution $S$ of the quotient $Y=X/\sigma$ is a numerical
Godeaux surface and $\sigma$ has only four fixed points;
cf.~Barlow~\cite{Barlow-Invent}. Conversely, if $S$ is of general
type, then $\sigma$ has only four fixed points and no fixed
divisors; Calabri-Mendes
Lopes-Pardini~\cite{Calabri-Lopes-Pardini}.

There are some examples of numerical Campedelli surfaces $X$ with
an involution $\sigma$ having only four fixed points.
Barlow~\cite{Barlow-Duke} constructed examples with
$\pi_1(X)=\mathbb{Z}/2\mathbb{Z} \oplus \mathbb{Z}/4\mathbb{Z},
\mathbb{Z}/8\mathbb{Z}$. Barlow~\cite{Barlow-Invent} also
constructed examples with $\pi_1(X)=\mathbb{Z}/5\mathbb{Z}$ whose
minimal resolution of the quotient by the involution is the first
example of a \emph{simply connected} numerical Godeaux surface.
Also all Catanese's surfaces \cite{Catanese} have such an
involution and $\pi_1=\mathbb{Z}/5\mathbb{Z}$. Recently Calabri,
Mendes Lopes, and Pardini~\cite{Calabri-Lopes-Pardini} constructed
a numerical Campedelli surface with torsion
$\mathbb{Z}/3\mathbb{Z} \oplus \mathbb{Z}/3\mathbb{Z}$ and two
involutions. Frapporti~\cite{Frapporti} showed that there exists an involution having only four fixed points on the numerical Campedelli surface with $\mathbb{Z}/2\mathbb{Z} \oplus \mathbb{Z}/4\mathbb{Z}$ constructed first in Bauer-Catanese-Grunewald-Pignatelli~\cite{Bauer-Catanese-Grunewald-Pignatelli}.

It is known that the orders of the algebraic fundamental groups of
numerical Campedelli surfaces are at most $9$ and the dihedral
groups $D_3$ and $D_4$ cannot be realized. Recently, the existence
question for numerical Campedelli surfaces with
$\abs{\pi_1^{\text{alg}}} \le 9$ was settled by the construction
of examples with $\pi_1^{\text{alg}}=\mathbb{Z}/4\mathbb{Z}$;
Frapporti~\cite{Frapporti} and H. Park-J. Park-D.
Shin~\cite{PPS-K2-H1Z4}. Hence it would be an interesting problem
to construct numerical Campedelli surfaces having an involution
with $\pi_1^{\text{alg}}=G$ for each given group $G$ with $\abs{G}
\le 9$. Especially we are concerned with the simply connected case
because the fundamental groups of all the known examples with an
involution have large order: $\abs{G} \ge 5$. Furthermore the
first example of \emph{simply connected} numerical Campedelli
surfaces is very recent (Y. Lee-J. Park~\cite{Lee-Park-K^2=2}),
but we have no information about the existence of an involution in
their example. The main theorem of this paper is:

\begin{maintheorem}[Corollary~\ref{corollary:Campedelli-with-involution}]
There are simply connected minimal complex surfaces $X$ of general
type with $p_g(X)=0$ and $K_X^2=2$ which have an involution
$\sigma$ such that the minimal resolution $S$ of the quotient
$Y=X/\sigma$ is a simply connected minimal complex surface of
general type with $p_g(S)=0$ and $K_S^2=1$.
\end{maintheorem}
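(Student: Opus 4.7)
The plan is to realize $X$ as a $\mathbb{Q}$-Gorenstein smoothing of a singular surface $X_0$ that carries a natural involution, engineered so that the quotient smoothing produces a simply connected numerical Godeaux surface. Concretely, I would first construct a singular normal projective surface $Z_0$ whose singularities are all Wahl singularities (cyclic quotients of the form $\frac{1}{n^2}(1,na-1)$) by taking a rational surface, typically built from $\mathbb{P}^2$ by blow-ups along a configuration of lines or sections of a pencil, and contracting chains of rational curves into Wahl singularities. The numerical data of the configuration should be chosen so that $K_{Z_0}^2 = 1$, $p_g(Z_0)=0$, and a $\mathbb{Q}$-Gorenstein smoothing $Z_t$ exists and yields a simply connected numerical Godeaux surface $S$; this second part follows once one checks that the complement of the exceptional locus on the resolution is simply connected and invokes the fact that Milnor fibers of Wahl singularities are rational homology balls.

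Next, I would choose a reduced branch divisor $B\subset Z_0$, even in $\mathrm{Pic}(Z_0)$, arranged so that the associated double cover $\pi\colon X_0\to Z_0$ has only Wahl singularities and satisfies $K_{X_0}^2 = 2$, $p_g(X_0)=0$. The covering involution $\sigma_0$ on $X_0$ is built into the construction. The choice of $B$ should be made so that near each Wahl singularity of $Z_0$ the branching is compatible (e.g.\ meeting the exceptional chain transversally in a controlled way), turning each singular point of $Z_0$ into either one or two Wahl singularities of $X_0$, without introducing worse singularities.

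The central step is to show that the $\mathbb{Q}$-Gorenstein deformation functor $\mathrm{Def}^{\mathrm{QG}}(X_0)$ is unobstructed and that a $\sigma_0$-equivariant smoothing exists. Since each Wahl singularity contributes a one-dimensional local $\mathbb{Q}$-Gorenstein smoothing factor, unobstructedness is a local-to-global cohomological statement, and I would prove it by means of the generalization of Burns--Wahl announced in the abstract, which should identify the space of first-order $\mathbb{Q}$-Gorenstein deformations on the resolution and reduce the vanishing of the obstruction to a computation of the cohomology of an explicit sheaf of derivations. Equivariance then amounts to exhibiting a nonzero element in the $\sigma_0$-invariant part of the smoothing direction at each singularity and lifting it; the quotient family $\mathcal X/\sigma \to \Delta$ is automatically a $\mathbb{Q}$-Gorenstein smoothing of $Z_0$, which reproduces the Godeaux surface $S$ on resolution.

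Finally, I would take $X = X_t$ for general $t$ and $\sigma$ the induced involution. The required numerical invariants are preserved under $\mathbb{Q}$-Gorenstein smoothing, and minimality is checked by showing there are no $(-1)$-curves, for instance by a standard intersection-theoretic argument using the degeneration of $K_X$ onto $K_{X_0}$. Simple connectedness of both $X$ and $S$ follows from a van Kampen argument combined with the fact that each Wahl Milnor fiber is simply connected, so only the topology of the complement of the exceptional loci on the resolutions of $X_0$ and $Z_0$ matters; this is arranged by the initial choice of configuration. The hardest step will certainly be the third one: establishing unobstructedness and equivariant smoothability, which requires the new Burns--Wahl-type tool; the combinatorial search for a configuration $(Z_0,B)$ simultaneously satisfying all numerical, singularity, simple-connectedness, and equivariance constraints is the main bookkeeping challenge.
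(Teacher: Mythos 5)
Your overall strategy---realize $X$ as a $\mathbb{Q}$-Gorenstein smoothing of a double cover of a singular rational surface built from a pencil of cubics, with unobstructedness supplied by a Burns--Wahl-type theorem and simple connectedness by a van Kampen argument on Milnor fibres---is the strategy of the paper. But two points where you stay vague are exactly where the construction can fail, so they count as genuine gaps rather than omitted bookkeeping.

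First, the branch divisor cannot be an essentially arbitrary reduced $2$-divisible divisor, and in particular it should \emph{not} meet the exceptional chains of the Wahl singularities, contrary to your suggestion that it meet them ``transversally in a controlled way.'' By the result of Calabri--Mendes Lopes--Pardini quoted in the introduction, if the minimal resolution of $X/\sigma$ is of general type then $\sigma$ has exactly four isolated fixed points and no fixed divisor. Hence on the minimal models the covering must be branched at four nodes of the quotient: the branch locus on the Godeaux side has to be four \emph{disjoint $(-2)$-curves} $B_1,\dotsc,B_4$ with $B_1+\dotsb+B_4\lineqv 2L$ (forcing $K\cdot L=0$, $L^2=-2$, which you never impose), disjoint from the class-$T$ points and contracted to $A_1$-points on the canonical model; their preimages are $(-1)$-curves that are blown down to reach the minimal Campedelli surface. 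A branch curve surviving as a divisor on the smooth fibre would make the quotient not of general type, and an index-two cover of a Wahl point branched at the singularity is in general no longer of class $T$.

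Second, the step you describe as ``exhibiting a nonzero element in the $\sigma_0$-invariant part of the smoothing direction at each singularity and lifting it'' is where the real work lies, and as stated it is only an assertion. The paper sidesteps equivariant deformation theory of the cover: since $H^2(Y,\sheaf{T_Y})=0$, all local deformations of the base globalize, so one chooses a deformation of $Y$ that \emph{keeps} the four $A_1$-points while smoothing the two Wahl points, simultaneously resolves the $A_1$'s to obtain smooth Godeaux surfaces $\widetilde{Y}_t$ containing four disjoint $(-2)$-curves, uses Manetti's lemma on the restriction $\pic(\widetilde{\mathcal{Y}})\to\pic(\widetilde{Y}_t)$ to propagate the relation $B_t\lineqv 2L_t$ to the general fibre, and only then takes the double cover fibrewise (Theorem~\ref{theorem:Q-smoothing-of-Y} and Theorem~\ref{theorem:campedelli}). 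Without either this mechanism or an actual proof that the invariant deformations of $X_0$ smooth every one of its singularities (the content behind Theorem~\ref{theorem:invariant-part}), your third step does not go through.
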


We also show that the minimal resolution $S$ of the quotient
$Y=X/\sigma$ has a local deformation space of dimension $4$
corresponding to deformations $\mathcal{S}$ of $S$ such that its
general fiber $\mathcal{S}_t$ is the minimal resolution of a
quotient $X_t/\sigma_t$ of a numerical Campedelli surface $X_t$ by
an involution $\sigma_t$; Theorem~\ref{theorem:invariant-part}. In
addition, we show that the resolution $S$ should be always simply
connected if the double cover $X$ is already simply connected;
Proposition~\ref{proposition:simply-connected=>simply-connected}.
Conversely Barlow~\cite{Barlow-Invent} showed that if the
resolution $S$ is a simply connected numerical Godeaux surface
then the possible order of the algebraic fundamental group of the
double cover $X$ is $1$, $3$, $5$, $7$, or $9$. As far as we know,
the example in Barlow~\cite{Barlow-Invent} was the only one whose
quotient is simply connected. It has $\pi_1(X) =
\mathbb{Z}/5\mathbb{Z}$ as mentioned earlier. Here we find an
example with $\pi_1(X)=1$. Hence it would be an intriguing problem
in this context to construct an example with
$\pi_1(X)=\mathbb{Z}/3\mathbb{Z}$.

In order to construct the examples, we combine a double covering
and a $\mathbb{Q}$-Gorenstein smoothing method developed in Y.
Lee-J. Park~\cite{Lee-Park-K^2=2}. First we build singular
surfaces by blowing up points and then contracting curves over a
specific rational elliptic surface. These singular surfaces differ
by contracting certain $(-2)$-curves. If we contract all of the
$(-2)$-curves, we obtain a stable surface $Y'$ in the sense of
Koll\'ar--Shepherd-Barron~\cite{KSB88}, and we prove that the
space of $\mathbb{Q}$-Gorenstein deformations of $Y'$ is smooth
and $8$ dimensional; Proposition~\ref{propsotion:stable-godeaux}.
A ($\mathbb{Q}$-Gorenstein) smoothing of $Y'$ in this space
produces simply connected numerical Godeaux surfaces. In
particular, the smoothing of $Y'$ gives the existence of a two
dimensional family of simply connected numerical Godeaux surfaces
with six $(-2)$-curves; Corollary~\ref{Corollary:six}. We also
prove that a four dimensional family in this space produces simply
connected numerical Godeaux surfaces with a $2$-divisible divisor
consisting of four disjoint $(-2)$-curves;
Theorem~\ref{theorem:Q-smoothing-of-Y} and
Theorem~\ref{theorem:invariant-part}. These numerical Godeaux
surfaces are used to construct the numerical Campedelli surfaces
with an involution. The desired numerical Campedelli surfaces are
obtained by taking double coverings of the numerical Godeaux
surfaces branched along the four disjoint $(-2)$-curves;
Theorem~\ref{theorem:campedelli}.

On the other hand we can also obtain the Campedelli family
explicitly from a singular stable surface $X'$. It comes from
blowing up points and contracting curves over a certain rational
elliptic surface; Proposition~\ref{proposition:CampExplicit}. The
$\mathbb{Q}$-Gorenstein space of deformations of $X'$ is smooth
and $6$ dimensional;
Proposition~\ref{proposition:stable-campedelli}. In both Godeaux
and Campedelli cases we compute $H^2(\sheaf{T})=0$ to show no
local-to-global obstruction to deform them;
Theorem~\ref{theorem:H2(TY)=0} and Theorem~\ref{theorem:H2(TX)=0}.
This involves a new technique (Theorem~\ref{theorem:Burns-Wahl})
which generalizes a result of Burns-Wahl~\cite{Burns-Wahl}
describing the space of first order deformations of a singular
complex surface with only rational double points.

\subsection*{Notations}

A cyclic quotient singularity (germ at $(0,0)$ of) $\mathbb{C}^2
/G$, where $G=\langle (x,y) \mapsto (\zeta x,\zeta^q y) \rangle$
with $\zeta$ a $m$-th primitive root of $1$, $1<q<m$, and
$(q,m)=1$, is denoted by $\frac{1}{m}(1,q)$. $A_m$ means
$\frac{1}{m}(1,m-1)$. A $(-1)$-curve (or $(-2)$-curve) in a smooth
surface is an embedded $\mathbb{CP}^1$ with self-intersection $-1$
(respectively, $-2$). Throughout this paper we use the same letter
to denote a curve and its proper transform under a birational map.
A singularity of class $T$  is a quotient singularity which admits
a $\mathbb{Q}$-Gorenstein one parameter smoothing. They are either
rational double points or $\frac{1}{dn^2}(1,dna-1)$ with $1<a<n$
and $(n,a)=1$; see Koll\'ar--Shepherd-Barron~\cite[\S3]{KSB88}.
For a normal variety $X$ its tangent sheaf $\sheaf{T_X}$ is
$\sheaf{Hom}_{\sheaf{O_X}}(\Omega_X^1,\sheaf{O_X})$. The dimension
of $H^i$ is $h^i$.

\subsection*{Acknowledgements}

The authors would like to thank Professor Yongnam Lee for helpful
discussion during the work, careful reading of the draft version,
and many valuable comments. The authors also wish to thank
Professor Jenia Tevelev for indicating a mistake in an earlier
version of this paper, and the referee especially for the remark on the proof of Proposition 3.7 which makes it simpler. Heesang park was supported by Basic Science
Research Program through the National Research Foundation of Korea
(NRF) grant funded by the Korean Government (2011-0012111).
Dongsoo Shin was supported by Basic Science Research Program
through the National Research Foundation of Korea (NRF) grant
funded by the Korean Government (2010-0002678). Giancarlo Urz\'ua
was supported by a FONDECYT Inicio grant funded by the Chilean
Government (11110047).

%------------------------------------------------------------------------------------------------
\section{Numerical Godeaux surfaces with a $2$-divisible divisor}
\label{section:Godeaux}

In this section we construct a family of simply connected
numerical Godeaux surfaces having a $2$-divisible divisor consisting of
four disjoint $(-2)$-curves by smoothing a singular surface $\widetilde{Y}$; Theorem~\ref{theorem:Q-smoothing-of-Y}. This is the key to construct numerical Campedelli surfaces with an involution. In addition, we describe the explicit stable model of the singular surface $\widetilde{Y}$. In fact, we construct a rational normal projective surface
$Y'$ with four singularities $A_3$, $A_3$, $\frac{1}{8^2}(1,8\cdot
5-1)$, $\frac{1}{7^2}(1,7 \cdot 4-1)$ and $K_{Y'}$ ample. Hence $Y'$
is a stable surface (cf. Koll\'ar-Shepherd-Barron~\cite{KSB88}, Hacking~\cite{Hacking}). We will
prove that the versal $\mathbb{Q}$-Gorenstein deformation space $\Defq
(Y')$ (cf.~Hacking~\cite[\S3]{Hacking}) is smooth and $8$
dimensional, and that the $\mathbb{Q}$-Gorenstein smoothings of $Y'$ are
simply connected numerical Godeaux surfaces. In particular, this shows that there are simply
connected numerical Godeaux surfaces whose canonical model has
precisely two $A_3$ singularities; Corollary~\ref{corollary:Godeaux}. Furthermore a four dimensional family in $\Defq(Y')$ produces the above simply connected numerical Godeaux surfaces with a $2$-divisible divisor consisting of four disjoint $(-2)$-curves; Theorem~\ref{theorem:Q-smoothing-of-Y} and Theorem~\ref{theorem:invariant-part}.

\subsection{A rational elliptic surface $E(1)$}

We start with a rational elliptic surface $E(1)$ with an
$I_8$-singular fiber, an $I_2$-singular fiber, and two nodal
singular fibers. In fact we will use the same rational elliptic
surface $E(1)$ in the papers H. Park-J. Park-D. Shin~\cite{PPS-K3,
PPS-K4}. However, we need to sketch the construction of $E(1)$ to
show the relevant curves that will be used to build the singular surfaces $\widetilde{Y}$ and $Y'$.

Let $L_1$, $L_2$, $L_3$, and $\ell$ be lines in $\mathbb{CP}^2$
and let $c$ be a smooth conic in $\mathbb{CP}^2$ given by the
following equations. They intersect as in
Figure~\ref{figure:pencil}.
\begin{equation*}
\begin{aligned}
&L_1: 2y-3z=0, \quad L_2: y+\sqrt{3}x=0, \quad L_3: y-\sqrt{3}x=0 \\
&\ell: x=0, \quad c: x^2 + (y-2z)^2-z^2=0.
\end{aligned}
\end{equation*}

\begin{figure}[tbh]
\centering
\includegraphics[scale=1]{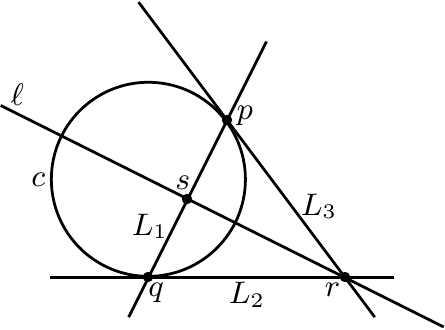}
\caption{A pencil of cubic curves}
\label{figure:pencil}
\end{figure}

We consider the pencil of cubics
\begin{equation*}%\label{equation:explicit-pencil}
\{\lambda (y-\sqrt{3}x)(y+\sqrt{3}x)(2y-3z) + \mu x(x^2 + (y-2z)^2-z^2) \mid [\lambda:\mu] \in \mathbb{CP}^1 \}
\end{equation*}
generated by the two cubic curves $L_1+L_2+L_3$ and $\ell+c$. This
pencil has four base points $p$, $q$, $r$, $s$, and four singular
members corresponding to $[\lambda:\mu]=[1:0], [0:1],
[2:3\sqrt{3}], [2:-3\sqrt{3}]$. The latter two singular members
are nodal curves, denoted by $F_1$ and $F_2$ respectively. They
have nodes at $[-\sqrt{3}:0:1]$ and $[\sqrt{3}:0:1]$,
respectively.

In order to obtain a rational elliptic surface $E(1)$ from the
pencil, we resolve all base points (including infinitely near
base-points) of the pencil by blowing-up $9$ times as follows. We
first blow up at the points $p$, $q$, $r$, $s$. Let $e_1$, $e_2$,
$e_3$, $e_4$ be the exceptional divisors over $p$, $q$, $r$, $s$,
respectively. We blow up again at the three points $e_1 \cap L_3$,
$e_2 \cap L_2$, $e_3 \cap \ell$. Let $e_5$, $e_6$, $e_7$ be the
exceptional divisors over the intersection points, respectively.
We finally blow up at each intersection points $e_5 \cap c$ and
$e_6 \cap c$. Let $e_8$ and $e_9$ be the exceptional divisors over
the blown-up points. We then get a rational elliptic surface
$E(1)=\mathbb{CP}^2\sharp 9\overline{\mathbb{CP}}^2$ over
$\mathbb{CP}^1$; see Figure~\ref{figure:E}.

The four exceptional curves $e_4$, $e_7$, $e_8$, $e_9$ are
sections of the elliptic fibration $E(1)$, which correspond to the
four base points $s$, $r$, $p$, $q$, respectively. The elliptic
fibration $E(1)$ has one $I_8$-singular fiber $\sum_{i=1}^{8} B_i$
containing all $L_i$ ($i=1,2,3$): $B_2=L_2$, $B_3=L_3$, and
$B_5=L_1$; cf.~Figure~\ref{figure:E}. We will use frequently the
sum $B=B_1+B_2+B_3+B_4$, which will be shown to be $2$-divisible.
The surface $E(1)$ has also one $I_2$-singular fiber consisting of
$\ell$ and $c$, and it has two more nodal singular fibers $F_1$
and $F_2$.

\begin{figure}[tbh]
\centering
\includegraphics[scale=1]{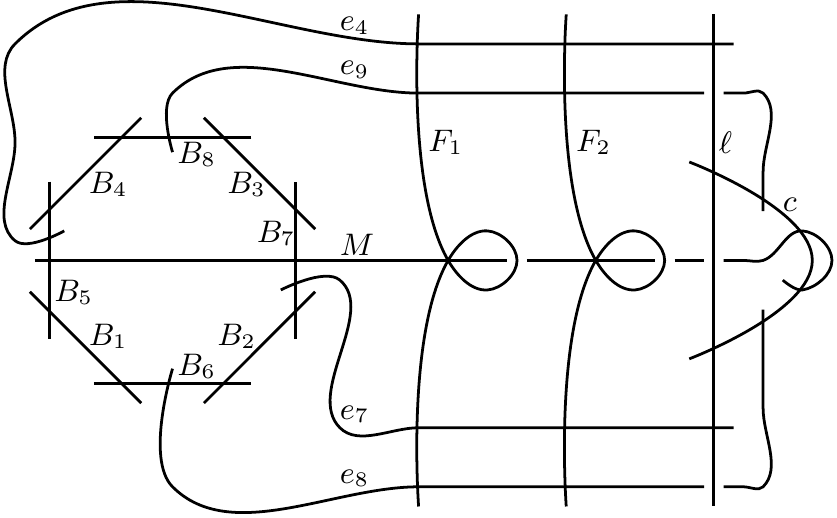}
\caption{A rational elliptic surface $E(1)$}
\label{figure:E}
\end{figure}

There is a special bisection on $E(1)$. Let $M$ be the line in
$\mathbb{CP}^2$ passing through the point $r=[0:0:1]$ and the two
nodes $[-\sqrt{3}:0:1]$ and $[\sqrt{3}:0:1]$ of $F_1$ and $F_2$.
Since $M$ meets every member in the pencil at three points but it
passes through only one base point, the proper transform of $M$ is
a bisection of the elliptic fibration $E(1) \to \mathbb{CP}^1$;
cf.~Figure~\ref{figure:E}. Note that $M^2=0$ in $E(1)$.

\subsubsection*{A $2$-divisible divisor on $E(1)$}

Let $h \in \pic (E(1))$ be the class of the pull-back of a line in
$\mathbb{CP}^2$. We denote again by $e_i \in \pic(E(1))$ the class
of the pull-back of the exceptional divisor $e_i$. We have the
following linear equivalences of divisors in $E(1)$:
\begin{equation*}%\label{equation:equivalence-class}
\begin{aligned}
B_1 &\lineqv e_2-e_6, & B_2 &\lineqv h-e_2-e_3-e_6, & B_3 &\lineqv h-e_1-e_3-e_5, \\
B_4 &\lineqv e_1-e_5, & B_5 &\lineqv h-e_1-e_2-e_4, & B_6 &\lineqv e_6-e_9,\\
B_7 &\lineqv e_3-e_7, & B_8 &\lineqv e_5-e_8, & F_1 &\lineqv 3h-e_1-\dotsb-e_9,\\
F_2 &\lineqv 3h-e_1-\dotsb-e_9, & \ell &\lineqv h-e_3-e_4-e_7.
\end{aligned}
%M &\lineqv h-e_4.
\end{equation*}

Let $L := h-e_3-e_5-e_6$. Note that the divisor
$B=B_1+B_2+B_3+B_4$ is $2$-divisible because of the relation
\begin{equation}\label{equation:2-divisible}
B=B_1+B_2+B_3+B_4 \lineqv 2(h-e_3-e_5-e_6) = 2L.
\end{equation}

\subsection{A rational surface $Z=E(1) \sharp 7 \overline{\mathbb{CP}^2}$}

In the construction of $Z$, we use only one section $S:= e_4$. We
first blow up at the two nodes of the nodal singular fibers $F_1$
and $F_2$ so that we obtain a blown-up rational elliptic surface
$W=E(1) \sharp 2\overline{\mathbb{CP}^2}$; Figure~\ref{figure:W}.
Let $E_1$ and $E_2$ be the exceptional curves over the nodes of
$F_1$ and $F_2$, respectively. We further blow up at each three
marked points $\bullet$ in Figure~\ref{figure:W}, and we blow up twice at the marked point $\bigodot$ (that is, we first blow-up $\bigodot$ and then again on the intersection point of the section and the exceptional curve; see Figure~\ref{figure:Z}). We then get $Z=E(1) \sharp 7
\overline{\mathbb{CP}^2}$ as in Figure~\ref{figure:Z}. There exist
two linear chains of the $\mathbb{CP}^1$ in $Z$ whose dual graphs
are given by:
\begin{equation*}%\label{equation:Cpq}
C_{8,5}=\uc{-2}-\uc{-3}-\uc{-5} -\uc{-3}, \qquad C_{7,4}=\uc{-2}-\uc{-6}-\uc{-2} -\uc{-3},
\end{equation*}
where $C_{8,5}$ consists of $\ell$, $S$, $F_1$, $E_1$, and $C_{7,4}$ contains $F_2$, $E_2$, $M$.

\begin{figure}[tbh]
\centering
\includegraphics[scale=1]{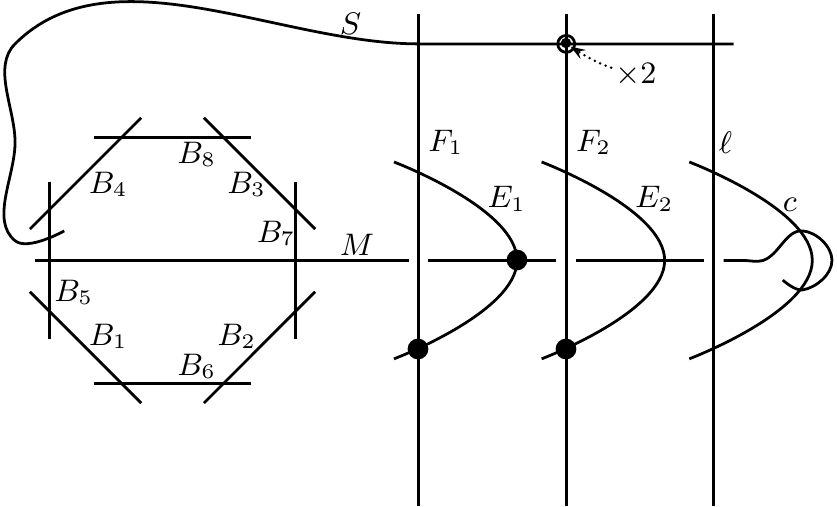}
\caption{A blown-up rational elliptic surface $W=E(1) \sharp 2\overline{\mathbb{CP}^2}$}
\label{figure:W}
\end{figure}

\begin{figure}[tbh]
\centering
\includegraphics[scale=1]{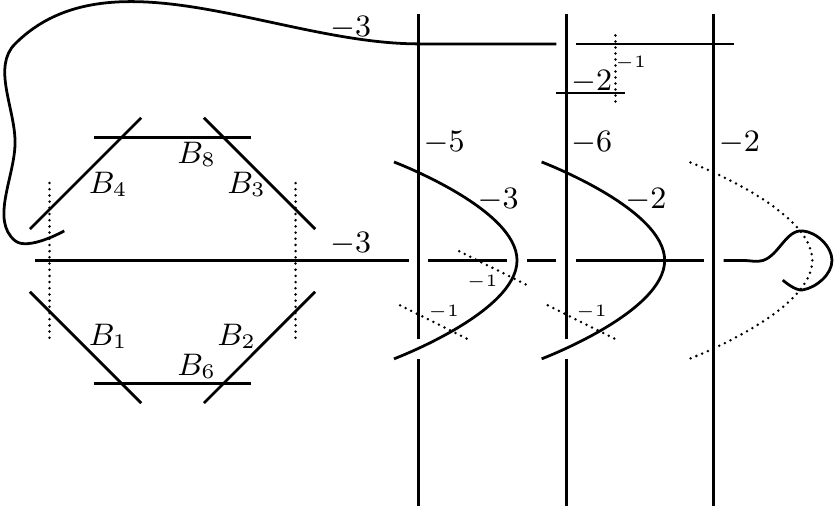}
\caption{A rational surface $Z = E(1) \sharp
7\overline{\mathbb{CP}^2}$} \label{figure:Z}
\end{figure}

\subsection{Numerical Godeaux surfaces}

We construct rational singular surfaces which produce under
$\mathbb{Q}$-Gorenstein smoothings simply connected surfaces of
general type with $p_g=0$ and $K^2=1$.

We first contract the two chains $C_{8,5}$ and $C_{7,4}$ of
$\mathbb{CP}^1$'s from the surface $Z$ so that we have a normal
projective surface $\widetilde{Y}$ with two singularities
$p_1$, $p_2$ of class $T$: $\frac{1}{8^2}(1,8\cdot 5-1),
\frac{1}{7^2}(1,7 \cdot 4-1)$. Denote the contraction morphism by
$\widetilde{\alpha} \colon Z \to \widetilde{Y}$. Let $Y$ be the
surface obtained by contracting the four $(-2)$-curves $B_1,
\dotsc, B_4$ in $\widetilde{Y}$. We denote the contraction
morphism by $\alpha \colon \widetilde{Y} \to Y$. Then $Y$ is also
a normal projective surface with singularities $p_1$, $p_2$ from
$\widetilde{Y}$, and four $A_1$'s (ordinary double points), denoted
by $q_1, \dotsc, q_4$. We finally contract $C_{8,5}$, $C_{7,4}$,
$B_4+B_8+B_3$ and $B_1+B_6+B_2$ in $Z$ to obtain $Y'$. It has the
singularities $\frac{1}{8^2}(1,8\cdot 5-1), \frac{1}{7^2}(1,7
\cdot 4-1)$, and two $A_3=\frac{1}{4}(1,3)$'s. Let $\alpha' \colon Z
\rightarrow Y'$ be the contraction.

In Section~\ref{section:obstruction} we will prove that the
obstruction spaces to local-to-global deformations of the singular
surfaces $\widetilde{Y}$, $Y$ and $Y'$ vanish. That is:

\begin{theorem}\label{theorem:H2(TY)=0}
$H^2(\widetilde{Y}, \sheaf{T_{\widetilde{Y}}})=0$, $H^2(Y,
\sheaf{T_Y})=0$, and $H^2(Y', \sheaf{T_{Y'}})=0$.
\end{theorem}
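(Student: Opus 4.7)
The plan is to reduce each of the three vanishings $H^2(X, \sheaf{T_X}) = 0$ for $X \in \{\widetilde{Y}, Y, Y'\}$ to a single type of vanishing on the smooth rational resolution $Z$, via the generalized Burns--Wahl theorem (Theorem~\ref{theorem:Burns-Wahl}) established earlier in the paper. For each contraction $\pi \colon Z \to X$ with exceptional divisor $E_X$, that theorem should furnish an exact sequence terminating in
\[
H^2(Z, \sheaf{T_Z}(-\log E_X)) \to H^2(X, \sheaf{T_X}) \to \bigoplus_{p \in \mathrm{Sing}(X)} T^2_{p} \to 0,
\]
where $T^2_{p}$ is the local obstruction space at each cyclic quotient singularity $p$. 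Since $T^2_{p} = 0$ for every surface cyclic quotient singularity, the problem reduces to showing $H^2(Z, \sheaf{T_Z}(-\log E_X)) = 0$ for each of the three log structures.

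To attack the residual vanishing on the smooth rational surface $Z = E(1) \sharp 7\overline{\mathbb{CP}^2}$, I would use Serre duality to rewrite the target as $H^0(Z, \Omega^1_Z(\log E_X) \otimes K_Z)^{\vee}$ and then exploit the residue sequence
\[
0 \to \Omega^1_Z \otimes K_Z \to \Omega^1_Z(\log E_X) \otimes K_Z \to \bigoplus_{E_i \subset E_X} K_Z|_{E_i} \to 0.
\]
Since $Z$ is rational, $H^0(Z, \Omega^1_Z \otimes K_Z) = 0$, so any global section must come from compatible residue data on the components of $E_X$. On each smooth rational component $E_i$ with $E_i^2 = -n_i$ one has $K_Z|_{E_i} = \sheaf{O}_{\mathbb{CP}^1}(n_i - 2)$, and the residues at the nodes of $E_X$ are forced to satisfy compatibility relations along the chains $C_{8,5}$ and $C_{7,4}$, together with the relations coming from the four disjoint $(-2)$-curves $B_1,\dots,B_4$ (for $Y$) and the chains $B_1+B_6+B_2$, $B_4+B_8+B_3$ (for $Y'$). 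An explicit chain-by-chain propagation of residues, using the very concrete description of the sections and fibers of $E(1)$ underlying $Z$, should then force all residue data to vanish.

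The main obstacle is the residue bookkeeping itself. The log divisors $E_{\widetilde{Y}} \subset E_Y \subset E_{Y'}$ induce the reverse inclusions $\sheaf{T_Z}(-\log E_{Y'}) \subset \sheaf{T_Z}(-\log E_Y) \subset \sheaf{T_Z}(-\log E_{\widetilde{Y}})$, so the three vanishings are not formally reducible to one another; the hardest case is $\widetilde{Y}$, where the log sheaf is largest and fewest residue constraints are available. My plan is to carry out the residue analysis first for $\widetilde{Y}$, and then reuse the argument with the additional constraints imposed by $B_1,\dots,B_4$, $B_6$, $B_8$ for the cases $Y$ and $Y'$. As a backup, should the residue calculation become opaque, one can instead use the short exact sequence
\[
0 \to \sheaf{T_Z}(-\log E_X) \to \sheaf{T_Z} \to \sheaf{N}'_{E_X/Z} \to 0
\]
together with $H^2(Z, \sheaf{T_Z}) = 0$ (valid because $Z$ is a smooth rational surface), reducing the task to the surjectivity of $H^1(Z, \sheaf{T_Z}) \to H^1(\sheaf{N}'_{E_X/Z})$, which can be checked component by component from the explicit realization of $Z$ as a $16$-fold blowup of $\mathbb{CP}^2$.
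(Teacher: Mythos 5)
Your overall skeleton --- reduce each vanishing to $H^2(Z,\sheaf{T_Z}(-\log E_X))=0$ on the resolution, then dualize and attack $H^0(Z,\Omega^1_Z(\log E_X)(K_Z))$ via the residue sequence --- has the same endpoints as the paper's proof, but two of your intermediate claims are off and the central computation is not set up to close. First, the reduction to the resolution is not what Theorem~\ref{theorem:Burns-Wahl} provides; the correct tool is Proposition~\ref{proposition:Lee-Park-Log}, which gives the equality $h^2(X,\sheaf{T_X})=h^2(Z,\sheaf{T_Z}(-\log E_X))$ outright. Moreover the assertion that $T^2_p=0$ for every cyclic quotient singularity is false (it fails for the long chains resolving $\frac{1}{8^2}(1,39)$ and $\frac{1}{7^2}(1,27)$), so the exact sequence you invoke would not do what you want even if it were available; fortunately Proposition~\ref{proposition:Lee-Park-Log} makes it unnecessary. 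Second, your ordering of the three cases is backwards: since $\sheaf{T_Z}(-\log(D+E))\subset\sheaf{T_Z}(-\log D)$ with quotient supported on curves, $H^2$ for the larger log divisor surjects onto $H^2$ for the smaller, so the hardest case is $Y'$ (largest exceptional set), and $\widetilde{Y}$ and $Y$ follow from it for free.

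The genuine gap is in the residue analysis itself. On a component $E_i$ with $E_i^2=-n_i$ the residue term is $K_Z|_{E_i}\cong\sheaf{O}_{\mathbb{CP}^1}(n_i-2)$, which has degree $1$, $3$, $4$ on the $(-3)$-, $(-5)$-, $(-6)$-curves of $C_{8,5}$ and $C_{7,4}$. For these components the connecting homomorphism $\bigoplus H^0(K_Z|_{E_i})\to H^1(\Omega^1_Z(K_Z))$ is not a first Chern class map, and ``propagation of residues along the chains'' --- which matches scalars at nodes --- does not determine a section of a positive-degree line bundle; your proposal gives no mechanism for killing that roughly $18$-dimensional space of residue data. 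The paper's proof is engineered precisely to avoid this: it first uses Proposition~\ref{proposition:Flenner-Zaidenberg} and Proposition~\ref{add-delete} to descend from $Z$ to the much simpler surface $W$, uses Theorem~\ref{theorem:Burns-Wahl} to strip away all the contractible $(-2)$-configurations ($B'$, $M$, $\ell$), reducing every case to $H^2(W,\sheaf{T_W}(-\log(F_1+F_2)))$, and then exploits the special relation $K_W\lineqv -F_2+E_1-E_2$ to absorb the twist and land in the \emph{untwisted} group $H^0(W,\Omega^1_W(\log(F_1+E_1)))$, where the connecting map is the first Chern class map and injectivity is just linear independence of $F_1$ and $E_1$ in $\pic(W)$. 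Without some analogue of that untwisting step (or of the stripping-down to a two-component nodal configuration), your plan leaves the essential difficulty untouched; the backup route via surjectivity of $H^1(\sheaf{T_Z})\to H^1(\sheaf{N}'_{E_X/Z})$ faces the same component-by-component problem in dual form.
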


The singular surface $Y'$ is the stable model of the singular surfaces $\widetilde{Y}$ and $Y$:

\begin{proposition}\label{propsotion:stable-godeaux}
The surface $Y'$ has $K_{Y'}^2=1$, $p_g(Y')=0$, and $K_{Y'}$ is
ample. The space $\Defq(Y')$ is smooth and $8$ dimensional. A
$\mathbb{Q}$-Gorenstein smoothing of $Y'$ is a simply connected canonical
surface of general type with $p_g=0$ and $K^2=1$.
\end{proposition}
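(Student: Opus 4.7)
The plan is to establish the four assertions in turn, using the resolution $\alpha' \colon Z \to Y'$ which contracts the two Wahl chains $C_{8,5}$ and $C_{7,4}$ together with the two $A_3$ chains $B_4+B_8+B_3$ and $B_1+B_6+B_2$. For the numerical invariants, since $Z$ is rational and every singularity of $Y'$ is rational, $p_g(Y') = p_g(Z) = 0$ and $q(Y') = 0$ are immediate. For $K_{Y'}^2$ I would write $K_Z \equiv (\alpha')^* K_{Y'} + \Delta$ with $\Delta$ an effective $\mathbb{Q}$-divisor supported on the exceptional locus; the coefficients of $\Delta$ on each chain are pinned down by the negative-definite linear system $K_Z \cdot E_{i,j} = \Delta \cdot E_{i,j} = -2 - E_{i,j}^2$ coming from adjunction. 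The two $A_3$ chains contribute zero, and the two Wahl chains give explicit rational discrepancies whose total contribution is $\Delta^2 = -8$, so $K_{Y'}^2 = K_Z^2 - \Delta^2 = -7 + 8 = 1$. For ampleness of $K_{Y'}$, Nakai--Moishezon reduces the question to verifying $(K_Z - \Delta) \cdot \widetilde C > 0$ for every non-contracted irreducible curve $\widetilde C \subset Z$. This is a finite list surviving from the pencil construction---the uncontracted components of the $I_8$ and $I_2$ fibers, the bisection $M$, the section $S$, and the exceptional divisors $E_1$, $E_2$---for which the inequality is checked using the displayed linear equivalences in $\pic(Z)$.

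For the smoothness and dimension of $\Defq(Y')$, Theorem~\ref{theorem:H2(TY)=0} gives $H^2(Y', \sheaf{T_{Y'}}) = 0$, so by Hacking~\cite{Hacking} the functor $\Defq$ is unobstructed and its tangent space fits in
\begin{equation*}
0 \to H^1(Y', \sheaf{T_{Y'}}) \to T^1_{Y', \text{QG}} \to H^0(\sheaf{T}^1_{Y', \text{QG}}) \to 0.
\end{equation*}
The skyscraper $\sheaf{T}^1_{Y', \text{QG}}$ has stalks of dimension $3$ at each $A_3$ (every rational double point deformation is automatically $\mathbb{Q}$-Gorenstein) and dimension $1$ at each T-singularity $\frac{1}{n^2}(1, na-1)$ with $d=1$, totaling $3+3+1+1 = 8$. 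It remains to show $H^1(Y', \sheaf{T_{Y'}}) = 0$, which I would extract from the local-to-global Leray spectral sequence for $\alpha'$ applied to a log tangent sheaf on $Z$ relative to the four exceptional chains, leveraging the rationality of $Z$ and the rigidity of the Hirzebruch--Jung strings.

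For the smoothing direction, let $X_t$ be the general fiber of a one-parameter $\mathbb{Q}$-Gorenstein smoothing of $Y'$. Since the Milnor fibers of $A_n$ and of T-singularities $\frac{1}{n^2}(1, na-1)$ are both simply connected, Seifert--van Kampen presents $\pi_1(X_t)$ as the quotient of $\pi_1(Y' \setminus \mathrm{Sing}(Y'))$ by the normal subgroup generated by the meridians around each singular point. I would then compute this fundamental group on the partial resolution $Z$: each contracted chain provides discs that kill the corresponding lens-space generators, and rationality of $Z$ dispatches what remains. Combined with $K_{Y'}^2 = 1$, $p_g(Y') = 0$, and ampleness of $K_{Y'}$, this exhibits $X_t$ as a canonical simply connected numerical Godeaux surface.

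I expect the main obstacle to be the vanishing $H^1(Y', \sheaf{T_{Y'}}) = 0$: this is a genuinely global cohomological input about a singular surface that does not follow formally from $H^2 = 0$, and it will require a careful bookkeeping of equisingular deformations of $Z$ preserving each of the four contracted chains. The dimension count at the singularities, the $K^2$ computation via discrepancies, and the ampleness verification are relatively routine once the configuration of $Z$ is fixed.
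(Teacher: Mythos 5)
Your treatment of the numerical invariants and of ampleness matches the paper's (explicit discrepancies on the two Wahl chains, zero contribution from the $A_3$'s, then intersecting $\alpha'^{*}K_{Y'}$ with the finitely many non-contracted curves), and your count $h^0(\sheaf{T}^1_{\text{QG},Y'})=3+3+1+1=8$ is exactly right. But the step you yourself flag as the main obstacle --- proving $H^1(Y',\sheaf{T_{Y'}})=0$ --- is a genuine gap in your argument, and the paper's point is precisely that you never need to prove it \emph{as an input}. Since $\Defq(Y')$ is smooth (by $H^2=0$ and Hacking) and the singularities smooth simultaneously, the dimension of $\Defq(Y')$ can be read off at a \emph{general} fiber $\mathcal{Y}'_t$: flatness of $\mathcal{T}_{\mathcal{Y}'|\Delta}$ and semicontinuity give $H^2(\mathcal{Y}'_t,\mathcal{T}_{\mathcal{Y}'_t})=0$, $H^0=0$ because $\mathcal{Y}'_t$ is of general type, and Riemann--Roch then gives $h^1(\mathcal{Y}'_t,\mathcal{T}_{\mathcal{Y}'_t})=10\chi-2K^2=8$. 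The vanishing $H^1(Y',\sheaf{T_{Y'}})=0$ is deduced \emph{afterwards} (Corollary~\ref{corollary:Godeaux}) by comparing $8=\dim T^1_{\text{QG},Y'}$ with the $8$-dimensional local term in Hacking's exact sequence. Your proposed Leray/log-tangent-sheaf computation on $Z$ is not carried out and would require exactly the equisingular bookkeeping you admit you do not have; as written, the dimension claim is unproved.

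The second gap is in the fundamental group computation. Your van Kampen setup assumes that the Milnor fibers of the T-singularities are simply connected, so that the meridians themselves die. This is false: the Milnor fiber of the $\mathbb{Q}$-Gorenstein smoothing of $\frac{1}{n^2}(1,na-1)$ has fundamental group $\mathbb{Z}/n\mathbb{Z}$, so gluing it in only kills the $n$-th power of the meridian (here $n=8$ and $n=7$), not the meridian. The argument therefore cannot conclude without an extra geometric input. The paper supplies it via the Lee--Park recipe: the normal circles around $M$ and $E_1$ are identified up to sign through the transversal $(-1)$-sphere $E_3$ meeting both chains, and since the residual orders $49$ and $64$ are coprime, both classes are forced to vanish. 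Without this coprimality/transversal-sphere step (or an equivalent one), your claim that ``rationality of $Z$ dispatches what remains'' does not establish simple connectedness of the smoothing.
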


\begin{proof}
For a surface $Y'$ with only singularities of type $T$ we have
\begin{equation*}
K_{Y'}^2 = K_{Z}^2 + \sum_{p \in \text{Sing}(Y')} s_p -
\sum_{p \in \text{Sing}(Y')} \mu_p,
\end{equation*}
where $s_p$ is the number of
exceptional curves over $p$ and $\mu_p$ is the Milnor number of
$p$. In our case, $K_{Y'}^2=-7 + 4 + 4 =1$. We have
$p_g(Y')=q(Y')=0$ because of the rationality of the singularities.

We now compute $\alpha'^{\ast}(K_{Y'})$ in a $\mathbb{Q}$-numerically effective way. Let $F$ be the general fiber of the elliptic
fibration in $Z$. Let $E_3$, $E_4$, $E_5$ be the exceptional
curves over $M \cap E_1$, $F_1 \cap E_1$, $F_2 \cap E_2$
respectively. Let $E_6$, $E_7$ be the exceptional curves over $F_2
\cap S$ with $E_7^2=-1$. Then, $K_Z \lineqv -F + \sum_{i=1}^7 E_i +
E_3 + E_4 + E_5 + E_7$. We also have $F \lineqv F_1 + 2 E_1 + 2E_3 +
3 E_4 \lineqv F_2 + 2E_2 + 3 E_5 + E_6 + E_7$. Writing $F \equiv
\frac{1}{2} F + \frac{1}{2} F$ in $K_Z$ and adding the
discrepancies from $p_1$ and $p_2$, we obtain
\begin{equation*}
\begin{split}
\alpha'^{\ast}(K_{Y'})
\equiv &\frac{3}{8} F_1 + \frac{5}{14} F_2 + \frac{5}{8} E_1 +
\frac{5}{7} E_2 + E_3\\
&+\frac{1}{2} E_4 +
\frac{1}{2} E_5 + \frac{13}{14} E_6 + \frac{3}{2} E_7 +
\frac{4}{7} M + \frac{3}{8} \ell + \frac{6}{8} S .
\end{split}
\end{equation*}

We now intersect $\alpha'^{\ast}(K_{Y'})$ with all the curves in its support,
which are not contracted by $\alpha'$, to check that $K_{Y'}$ is
nef. Moreover, if $\Gamma.\alpha'^{\ast}(K_{Y'})=0$ for a curve
$\Gamma$ not contracted by $\alpha'$, then $\Gamma$ is a component
of a fiber in the elliptic fibration which does not intersect any
curve in the support. This is because $F_1$, $E_1$, $E_3$, and
$E_4$ belong to the support, and they are the components of a
fiber. One easily checks that $\Gamma$ does not exist, proving
that $K_{Y'}$ is ample. Therefore any $\mathbb{Q}$-Gorenstein smoothing of
$Y'$ over a (small) disk will produce canonical surfaces; cf.~Koll\'ar-Mori~\cite[p.\thinspace 34]{Kollar-Mori}.

To compute the fundamental group of a $\mathbb{Q}$-Gorenstein smoothing we
use the recipe in Y. Lee-J. Park~\cite{Lee-Park-K^2=2}. We follow the argument as
in Y. Lee-J. Park~\cite[p.\thinspace 493]{Lee-Park-K^2=2}. Consider the normal circles
around $M$ and $E_1$. We can compare them through the transversal
sphere $E_3$. Since the orders of the circles are $49$ and $64$,
which are coprime, we obtain that both end up being trivial.

The smoothness of $\Defq(Y')$ follows from
Theorem~\ref{theorem:H2(TY)=0} and Hacking~\cite[\S3]{Hacking}. To
compute the dimension, we observe that if $\mathcal{Y}'
\rightarrow \Delta$ is a $\mathbb{Q}$-Gorenstein smoothing of
$\mathcal{Y}'_0=Y'$ and $\mathcal{T}_{\mathcal{Y}'|\Delta}$ is the
dual of $\Omega_{\mathcal{Y}'|\Delta}^1$, then
$\mathcal{T}_{\mathcal{Y}'|\Delta}$ restricts to $\mathcal{Y}'_t$
as $\mathcal{T}_{\mathcal{Y}'_t}$ (tangent bundle of
$\mathcal{Y}'_t$) when $t\neq 0$, and
$\mathcal{T}_{\mathcal{Y}'|\Delta}|_{\mathcal{Y}'_0} \subset
\mathcal{T}_{\mathcal{Y}'_0}$ with cokernel supported at the
singular points of $\mathcal{Y}'_0$; cf.~Wahl~\cite{Wahl81}. Then
the flatness of $\mathcal{T}_{\mathcal{Y}'|\Delta}$ and
semicontinuity in cohomology plus the fact that $H^2(Y',
\mathcal{T}_{Y'})=0$ gives
$H^2(\mathcal{Y}'_t,\mathcal{T}_{\mathcal{Y}'_t})=0$ for any $t$.
But then, since $\mathcal{Y}'_t$ is of general type,
Hirzebruch-Riemann-Roch Theorem says
\[H^1(\mathcal{Y}'_t,\mathcal{T}_{\mathcal{Y}'_t})=10
\chi(\mathcal{Y}'_t,\mathcal{O}_{\mathcal{Y}'_t}) - 2
K_{\mathcal{Y}'_t}^2=10-2=8.\]
This proves the claim.
\end{proof}

\begin{corollary}\label{corollary:Godeaux}
There is a two dimensional family of simply connected canonical
numerical Godeaux surfaces with two $A_3$ singularities.
\label{Corollary:six}
\end{corollary}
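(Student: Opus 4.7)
The plan is to realize the asserted two-dimensional family as a slice of $\Defq(Y')$ that is equisingular at the two $A_3$ loci while smoothing the two class $T$ singularities simultaneously.

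First, I would use the vanishing $H^2(Y',\sheaf{T_{Y'}})=0$ from Theorem~\ref{theorem:H2(TY)=0}, together with the local-to-global spectral sequence for $\mathbb{Q}$-Gorenstein deformations, to show that the natural map from the tangent space of $\Defq(Y')$ to the direct sum of local $\mathbb{Q}$-Gorenstein deformation spaces $\bigoplus_p T^1_{QG,p}$ is surjective. Each $A_3$ singularity contributes a three-dimensional classical Gorenstein deformation space (its Milnor number is $3$), and each class $T$ singularity $\frac{1}{n^2}(1,na-1)$ of type $d=1$ contributes a one-dimensional $\mathbb{Q}$-Gorenstein smoothing space. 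The total $3+3+1+1=8$ matches $\dim \Defq(Y')=8$ established in Proposition~\ref{propsotion:stable-godeaux}, so the tangent map is an isomorphism. Since both source and target are smooth germs of the same dimension, this upgrades to an isomorphism of germs, and $\Defq(Y')$ splits as a product of the four local $\mathbb{Q}$-Gorenstein deformation spaces.

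Next, I would take the two-dimensional subspace of $\Defq(Y')$ corresponding to the factor $\mathrm{Def}^{QG}_{p_1}\times\mathrm{Def}^{QG}_{p_2}$, with the two $A_3$ factors set to zero. A general point of this subspace parametrizes a partial $\mathbb{Q}$-Gorenstein smoothing $Y'_t$ of $Y'$ in which both class $T$ singularities are smoothed (since the one-dimensional $\mathbb{Q}$-Gorenstein deformation space of a class-$T$ singularity with $d=1$ is generically a smoothing), while the two $A_3$ singularities persist. Ampleness of $K_{Y'}$ and the constancy of $K^2$ and $p_g$ under $\mathbb{Q}$-Gorenstein deformations, already established in Proposition~\ref{propsotion:stable-godeaux}, then give that $Y'_t$ is a canonical surface with $K^2=1$, $p_g=0$, and exactly two $A_3$ singularities.

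The main obstacle I expect is simple connectedness of the minimal resolution $\widetilde{Y}'_t$. For this I would connect $Y'_t$ to a generic full smoothing $Y'_s$ of $Y'$ (one that also smooths the two $A_3$'s) by a one-parameter arc in $\Defq(Y')$ and invoke Brieskorn's simultaneous resolution of rational double points: after a finite base change the family of minimal resolutions extends smoothly across the central fiber, producing a smooth family interpolating $\widetilde{Y}'_t$ with $Y'_s$. By Ehresmann's theorem these two four-manifolds are diffeomorphic, and Proposition~\ref{propsotion:stable-godeaux} already shows that $Y'_s$ is simply connected. Thus $\widetilde{Y}'_t$ is a simply connected minimal numerical Godeaux surface whose canonical model $Y'_t$ has precisely two $A_3$ singularities, which gives the asserted two-dimensional family.
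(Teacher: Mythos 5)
Your proposal is correct and follows essentially the same route as the paper: both use the exact sequence relating $H^1(Y',\sheaf{T_{Y'}})$, the tangent space of $\Defq(Y')$, and the local deformation spaces, together with the dimension count $3+3+1+1=8$ matching $\dim\Defq(Y')=8$, to conclude that the global-to-local map is an isomorphism and then cut out the two-dimensional slice smoothing only $p_1$ and $p_2$. Your added Brieskorn--Ehresmann argument for simple connectedness is a reasonable way to make explicit a point the paper leaves implicit.
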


\begin{proof}
We consider the sequence $$ 0 \rightarrow H^1(Y',
\mathcal{T}_{Y'}) \rightarrow T_{\text{QG},Y'}^1 \rightarrow
H^0(Y', \mathcal{T}_{\text{QG},Y'}^1) \rightarrow 0 $$ at the end
of Hacking~\cite[\S3]{Hacking}. We just proved that
$T_{\text{QG},Y'}^1$ is $8$ dimensional, and we know that $H^0(Y',
\mathcal{T}_{\text{QG},Y'}^1)$ is $8$ dimensional, since each
$A_3$ gives $3$ dimensions and each $p_i$ gives $1$ dimension.
Therefore $H^1(Y', \mathcal{T}_{Y'})=0$. To produce the claimed
family we need to smooth up at the same time $p_1$ and $p_2$.
\end{proof}

A simply connected numerical Godeaux surfaces with a $2$-divisible
divisor consisting of four disjoint $(-2)$-curves is obtained from
a $\mathbb{Q}$-Gorenstein smoothing of the singular surface
$\widetilde{Y}$:

\begin{theorem}\label{theorem:Q-smoothing-of-Y}
\begin{enumerate}[(a)]
\item There is a $\mathbb{Q}$-Gorenstein smoothing
$\widetilde{\mathcal{Y}} \to \Delta$ over a disk $\Delta$ with
central fiber $\widetilde{\mathcal{Y}}_0=\widetilde{Y}$ and an
effective divisor $\mathcal{B} \subset \widetilde{\mathcal{Y}}$
such that the restriction to a fiber $\widetilde{Y}_t$ over $t \in \Delta$
\[B_t := \mathcal{B} \cap \widetilde{Y}_t=B_{1,t}+B_{2,t}+B_{3,t}+B_{4,t}\]
is $2$-divisible in $\widetilde{Y}_t$ consisting of four
disjoint $(-2)$-curves and $B_0=B$.

\item There is a $\mathbb{Q}$-Gorenstein deformation $\mathcal{Y}
\to \Delta$ of $Y$ with central fiber $\mathcal{Y}_0=Y$ such that a fiber $Y_t$ over $t \neq 0$ has four
ordinary double points as its only singularities and the minimal
resolution of $Y_t$ is the corresponding fiber
$\widetilde{Y}_t$ of $\widetilde{\mathcal{Y}} \to
\Delta$.
\end{enumerate}
\end{theorem}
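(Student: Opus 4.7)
The proof splits into two stages: first, build a partial $\mathbb{Q}$-Gorenstein smoothing $\widetilde{\mathcal{Y}}\to\Delta$ of $\widetilde{Y}$ in which the four $(-2)$-curves $B_i$ persist as a $2$-divisible configuration; second, contract them fiberwise to obtain the deformation $\mathcal{Y}\to\Delta$ of $Y$ for part~(b).

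\emph{Existence of the smoothing.} By Theorem~\ref{theorem:H2(TY)=0}, $H^2(\widetilde{Y},\sheaf{T_{\widetilde{Y}}})=0$, so Hacking~\cite[\S3]{Hacking} gives that $\Defq(\widetilde{Y})$ is smooth. The only non-Du Val singularities of $\widetilde{Y}$ are $p_1,p_2$, each of class $T$ with a one-parameter smoothing direction in $H^0(\widetilde{Y},\sheaf{T}^1_{\Defq,\widetilde{Y}})$; using the surjection $T^1_{\Defq,\widetilde{Y}}\twoheadrightarrow H^0(\widetilde{Y},\sheaf{T}^1_{\Defq,\widetilde{Y}})$ from the standard exact sequence, I lift the sum of these to a disk $\Delta\to\Defq(\widetilde{Y})$ whose germ at each $p_i$ is a genuine smoothing. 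This produces $\widetilde{\mathcal{Y}}\to\Delta$ with smooth total space and smooth general fibers. The four $B_i$ are disjoint smooth Cartier $(-2)$-curves in the smooth locus of $\widetilde{Y}$, so they extend to disjoint flat families of smooth rational curves $\mathcal{B}_i\subset\widetilde{\mathcal{Y}}$, with $B_{i,t}=\mathcal{B}_i\cap\widetilde{Y}_t$ a $(-2)$-curve on the smooth surface $\widetilde{Y}_t$; set $\mathcal{B}=\sum_i\mathcal{B}_i$.

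\emph{$2$-divisibility on every fiber.} Equation~(\ref{equation:2-divisible}) gives $\sheaf{O_Z}(B)\cong\sheaf{O_Z}(2L)$. A direct computation with the explicit classes of the chains $C_{8,5}$ and $C_{7,4}$ (the exceptional divisors of $\widetilde{\alpha}\colon Z\to\widetilde{Y}$) shows that $L$ has zero intersection with each of their components, so $\sheaf{O_Z}(L)$ descends to a line bundle $\mathcal{L}\in\pic(\widetilde{Y})$ satisfying $\mathcal{L}^{\otimes 2}\cong\sheaf{O_{\widetilde{Y}}}(B)$. Since $\widetilde{Y}$ is rational with only rational singularities, $H^1(\widetilde{Y},\sheaf{O_{\widetilde{Y}}})=H^2(\widetilde{Y},\sheaf{O_{\widetilde{Y}}})=0$; the exponential sequence and semicontinuity then show that line bundles on $\widetilde{Y}$ lift to $\widetilde{\mathcal{Y}}$ after replacing $\Delta$ by a smaller disk. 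Lifting $\mathcal{L}$ to $\widetilde{\mathcal{L}}$ on $\widetilde{\mathcal{Y}}$, the sheaf $\widetilde{\mathcal{L}}^{\otimes 2}\otimes\sheaf{O}(-\mathcal{B})$ is trivial on the central fiber, hence trivial globally, so $\widetilde{\mathcal{L}}^{\otimes 2}\cong\sheaf{O}(\mathcal{B})$; fiberwise this is exactly the $2$-divisibility of $B_t$.

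\emph{Part~(b) via contraction.} The four disjoint flat families $\mathcal{B}_i/\Delta$ of $\mathbb{P}^1$'s have fiberwise self-intersection $-2$ and satisfy $K_{\widetilde{\mathcal{Y}}/\Delta}\cdot\mathcal{B}_i=0$, so by relative contraction over $\Delta$ I can contract them simultaneously, producing $\widetilde{\mathcal{Y}}\to\mathcal{Y}\to\Delta$. The central fiber of the contraction is $\widetilde{Y}\to Y$ by construction of $Y$, and for $t\neq 0$ the map $\widetilde{Y}_t\to\mathcal{Y}_t$ contracts four disjoint $(-2)$-curves to four $A_1$ singularities, so $\mathcal{Y}_t$ has precisely these four singularities and $\widetilde{Y}_t$ is its minimal resolution. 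The contraction is crepant, so $\mathcal{Y}\to\Delta$ is automatically $\mathbb{Q}$-Gorenstein. The main technical obstacle throughout is the $2$-divisibility step: one must ensure that the square root $\mathcal{L}$ descends through the class-$T$ singularities $p_1,p_2$ and then lifts over the family, which hinges on the rationality of $\widetilde{Y}$ (to kill $H^{1,2}(\widetilde{Y},\sheaf{O_{\widetilde{Y}}})$) and on the specific numerical orthogonality between $L$ and the contracted chains on $Z$.
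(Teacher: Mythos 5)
There is a genuine gap at the very first step: you assert that because the four $B_i$ are smooth $(-2)$-curves in the smooth locus of $\widetilde{Y}$, "they extend to disjoint flat families of smooth rational curves $\mathcal{B}_i\subset\widetilde{\mathcal{Y}}$" in a smoothing $\widetilde{\mathcal{Y}}\to\Delta$ chosen only so that it smooths $p_1$ and $p_2$. This is false for a general such smoothing. A $(-2)$-curve $C$ has $h^1(N_{C/\widetilde{Y}})=h^1(\mathbb{P}^1,\sheaf{O}(-2))=1$, so its deformations inside the family are obstructed; while the line bundle $\sheaf{O}(B_i)$ does extend (since $q=p_g=0$), its \emph{effectivity} on nearby fibers is a nontrivial closed condition. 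In fact the paper's own Lemma~\ref{lemma:invariant} quantifies this: $h^1(\widetilde{Y}_t,\sheaf{T_{\widetilde{Y}_t}}(-\log B_t))=4$ while $h^1(\widetilde{Y}_t,\sheaf{T_{\widetilde{Y}_t}})=8$, so the deformations preserving the four $(-2)$-curves form a codimension-$4$ subfamily. Your disk $\Delta\to\Defq(\widetilde{Y})$, chosen merely to hit the smoothing directions of $p_1,p_2$, will generically miss this locus, and everything downstream (the divisor $\mathcal{B}$, the $2$-divisibility, the fiberwise contraction in part~(b)) collapses. To repair your route you would need to work with deformations of the pair $(\widetilde{Y},B)$, i.e.\ with $\sheaf{T_{\widetilde{Y}}}(-\log B)$, and show that the forgetful map to $\Defq(\widetilde{Y})$ still reaches the smoothing directions of $p_1,p_2$ (which requires an additional vanishing such as $H^2(\sheaf{T_{\widetilde{Y}}}(-\log B))=0$); you do none of this.

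The paper's proof runs in the opposite direction precisely to avoid this issue: it deforms the \emph{singular} surface $Y$ (where the $B_i$ have been contracted to four $A_1$-points), uses $H^2(Y,\sheaf{T_Y})=0$ to globalize the local choice of keeping all four nodes while smoothing $p_1,p_2$, and then performs a simultaneous resolution of the four $A_1$'s in the family. The exceptional curves of that simultaneous resolution \emph{are} the flat families $\mathcal{B}_i$, so persistence of the $(-2)$-curves comes for free, and part~(a) and part~(b) are obtained together rather than by a subsequent relative contraction. Your $2$-divisibility argument (extend $L$ using $p_g=q=0$ and specialize) is essentially the paper's Manetti-type argument and is fine once $\mathcal{B}$ exists; the problem is solely the unjustified existence of $\mathcal{B}$. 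A minor additional inaccuracy: the total space of a one-parameter $\mathbb{Q}$-Gorenstein smoothing of $\frac{1}{dn^2}(1,dna-1)$ with $n>1$ is not smooth (it has a cyclic quotient threefold singularity), so "smooth total space" should be deleted even in a corrected version.
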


\begin{proof}
We apply a similar method in Y. Lee-J.
Park~\cite{Lee-Park-Horikawa}. Since any local deformations of the
singularities of $Y$ can be globalized by
Theorem~\ref{theorem:H2(TY)=0}, there are $\mathbb{Q}$-Gorenstein
deformations of $Y$ over a disk $\Delta$ which keep all four
ordinary double points and smooth up $p_1$ and $p_2$. Let
$\mathcal{Y} \to \Delta$ be such deformation, with $Y$ as its central fiber, and $\mathcal{Y}_t$ ($t \neq 0$) a normal
projective surface with four $A_1$s as its only singularities. We
resolve simultaneously these four singularities in each fiber
$\mathcal{Y}_t$. We then get a family $\widetilde{\mathcal{Y}} \to
\Delta$ that is a $\mathbb{Q}$-Gorenstein smoothing of the central
fiber $\widetilde{Y}$, which shows
that a $\mathbb{Q}$-Gorenstein smoothing of $Y$ can be lifted to a
$\mathbb{Q}$-Gorenstein smoothing of the pair $(Y,B)$, i.e. the
$2$-divisible divisor $B$ on $\widetilde{Y}$ is extended to an
effective divisor $\mathcal{B} \subset \widetilde{\mathcal{Y}}$.

We finally show that the effective divisor $B_t$ is 2-divisible in
$\widetilde{Y}_t$ for $t \neq 0$. According to
Manetti~\cite[Lemma~2]{Menetti}, the natural restriction map $r_t:
\pic(\widetilde{\mathcal{Y}}) \to \pic(\widetilde{Y}_t)$
is injective for every $t \in \Delta$ and bijective for $0 \in
\Delta$. Here we are using that
$p_g(\widetilde{Y})=q(\widetilde{Y})=0$.
Since the divisor $B$ is nonsingular, $\mathcal{B}_t$ is also
nonsingular. Since $B \lineqv 2L$ in \eqref{equation:2-divisible},
it follows that $B_t \lineqv 2L_t$, where $L$ is extended to a
line bundle $\mathcal{L} \subset \widetilde{\mathcal{Y}}$ and
$L_t$ is the corresponding restriction.
\end{proof}

%------------------------------------------------------------------------------------------------------------------
\section{Numerical Campedelli surfaces with an involution}
\label{section:Campedelli}

The main purpose of this section is to construct simply connected
numerical Campedelli surfaces with an involution. Along the way,
we will introduce a rational normal projective surface $X'$ with
$6$ singularities (two $A_1$, two $\frac{1}{8^2}(1,8\cdot 5-1)$,
and two $\frac{1}{7^2}(1,7 \cdot 4-1)$) and $K_{X'}$ ample. A
certain four dimensional $\mathbb{Q}$-Gorenstein deformation of $X'$ will
produce numerical Campedelli surfaces with an involution.

Recall that the rational surface $Z$ has a $2$-divisible divisor
$B=B_1+B_2+B_3+B_4$; cf.~\eqref{equation:2-divisible}. Let $V$ be
the double cover of $Z$ branched along the divisor $B$, where the
double cover is given by the data $B \lineqv 2L$, $L = h-e_3-e_5-e_6$. We denote the double covering by $\psi: V \to Z$. The surface $V$ has two $C_{8,5}$'s and two $C_{7,4}$'s.

On the other hand the surface $V$ can be obtained from a certain rational elliptic surface by blowing-ups, as we now explain. The morphism $\psi \colon V \to Z$ blows down to a double cover $\psi' \colon V' \to E(1)$ branched along $B$. The ramification divisor $\psi'^{-1}(B)$ consists of four disjoint $(-1)$-curves $R_1, \dotsc, R_4$. We blow down them from $V'$ to obtain a surface $E(1)'$; cf.~Figure~\ref{figure:E'}. In Figure~\ref{figure:E'} the pull-back of the $B_i$ are the $B'_i$, of the curve $\ell$ is $\ell_1+\ell_2$, of the section $S$ is $S_1 + S_2$, and of the double section $M$ is $M_1+ M_2$. Each $I_2$ in $E(1)'$ is the pull-back of each $I_1$ in $E(1)$.

\begin{figure}[tbh]
\centering
\includegraphics[scale=1]{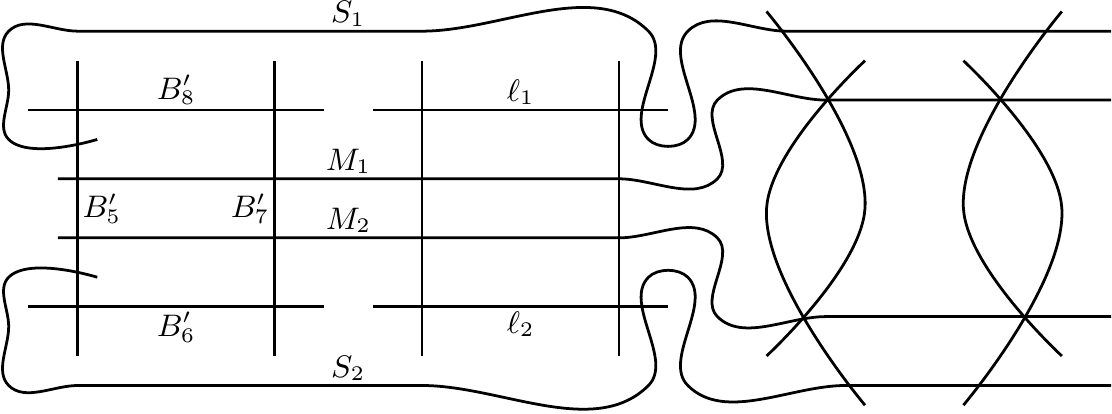}
\caption{Relevant curves in the elliptic rational surface $E(1)'$}
\label{figure:E'}
\end{figure}

Note that the surface $E(1)'$ has an elliptic fibration structure with two $I_4$-singular fibers and two $I_2$-singular fibers. In fact, the surface $E(1)'$ can
be obtained from the pencil of cubics in $\mathbb{CP}^2$
\[\{\lambda x(y-z)(x-2z) + \mu y(x-z)(y-2z) \mid [\lambda:\mu] \in
\mathbb{CP}^1 \}\]
where the $I_4$-singular fibers come from $x(y-z)(x-2z)=0$
and $y(x-z)(y-2z)=0$, and the $I_2$-singular fibers come from $(x+y-2z)(xy-yz-xz)=0$
and $(x-y)(xy-xz+2z^2-yz)=0$. The two double sections $M_1$ and $M_2$
are defined by the lines $x=(1+\sqrt{-1})z$ and $x=(1-\sqrt{-1})z$. In summary:

\begin{proposition}\label{proposition:CampExplicit}
The surface $V'$ is the blow-up at four nodes of one
$I_4$-singular fiber of the rational elliptic fibration $E(1)'
\rightarrow \mathbb{CP}^1$. Hence the surface $V$ can be obtained
from $V'$ by blowing-up in the obvious way.
\end{proposition}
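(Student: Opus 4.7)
The plan is to analyze the double cover $\psi'\colon V' \to E(1)$ explicitly over the $I_8$-singular fiber of $E(1)$, and identify the resulting configuration in $V'$ with the blow-up of an $I_4$-fiber in $E(1)'$ at its four nodes.

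Using the linear equivalences recorded in Section~\ref{section:Godeaux}, I would first compute the pairwise intersections $B_i \cdot B_j$ to verify that the cyclic ordering of the $I_8$-fiber places the branch components $B_1, B_2, B_3, B_4$ in the four \emph{alternating} positions: $B_1 - B_5 - B_4 - B_8 - B_3 - B_7 - B_2 - B_6 - B_1$. In particular, each non-branch component $B_j$ ($j\in\{5,6,7,8\}$) meets $B$ transversely at exactly two points. Then $\psi'^* B_i = 2 R_i$ with $R_i\cong B_i$ a $(-1)$-curve for $i\leq 4$ (from $2R_i^2 = \psi'^*B_i \cdot R_i = B_i^2 = -2$); and for $j\geq 5$ the restriction $\tilde B_j \to B_j \cong \mathbb{CP}^1$ is a double cover branched at two points, hence $\tilde B_j \cong \mathbb{CP}^1$ by Riemann--Hurwitz, with $\tilde B_j^2 = 2 B_j^2 = -4$. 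The two cyclic neighbors of each $B_i$ contribute transverse intersections $R_i \cdot \tilde B_j = 1$, producing in $V'$ an $8$-cycle of alternating $(-1)$- and $(-4)$-curves.

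The four $R_i$ are pairwise disjoint, and the contraction $V'\to E(1)'$ --- which is how $E(1)'$ is defined --- raises each $\tilde B_j^2$ by $+2$, turning $\tilde B_j$ into a $(-2)$-curve, and collapses the $8$-cycle into a $4$-cycle of $(-2)$-curves joined at the four images of the $R_i$: precisely an $I_4$-fiber whose four nodes are those images. This proves the first statement. For the second, $Z$ is obtained from $E(1)$ by seven blow-ups at points disjoint from $B$ (the two nodes of $F_1, F_2$ and the five additional marked points in Figure~\ref{figure:Z}); each such point has two preimages in $V'$, and blowing these fourteen preimages up produces $V$.

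The main obstacle is the intersection-theoretic bookkeeping needed to pin down the cyclic order of the $I_8$-fiber, but this reduces to a direct computation with the explicit divisor classes. The remaining singular fibers of $E(1)'$ --- the second $I_4$ and the two $I_2$'s, coming from connected \'etale double covers of the $I_2$ fiber $\ell + c$ and of each nodal $I_1$ fiber $F_i$ --- are identified independently through the explicit pencil construction recalled immediately before the proposition.
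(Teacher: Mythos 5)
Your argument is correct and supplies exactly the verification that the paper leaves implicit: the paper defines $E(1)'$ as the blow-down of the four ramification $(-1)$-curves $R_1,\dotsc,R_4$ and then justifies the fiber structure only by exhibiting $E(1)'$ explicitly as the pencil $\{\lambda x(y-z)(x-2z)+\mu y(x-z)(y-2z)\}$, presenting Proposition~\ref{proposition:CampExplicit} as ``In summary.'' Your route is intrinsic rather than explicit, and it checks out: the divisor classes do give the alternating cyclic order $B_1-B_5-B_4-B_8-B_3-B_7-B_2-B_6$, so each non-branch component meets $B$ in two points, the pull-back of the $I_8$-fiber is an $8$-cycle of alternating $(-1)$- and $(-4)$-curves, and contracting the disjoint $R_i$ yields an $I_4$-fiber with nodes at their images; likewise the seven centers of $Z\to E(1)$ avoid $B$ (the $B_i$ keep self-intersection $-2$ in $Z$), so $V\to V'$ is the corresponding fourteen blow-ups. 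What your approach buys is independence from the explicit cubic equations; what it leaves aside --- and you rightly defer this, as does the paper --- is the identification of the remaining singular fibers of $E(1)'$, for which one needs either the explicit pencil or a check that the preimages of $\ell+c$, $F_1$, $F_2$ are connected (e.g.\ that $L$ restricts to a nontrivial $2$-torsion class on each $F_i$). That identification is not needed for the statement of the proposition itself, so this is a matter of presentation, not a gap.
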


Let $\widetilde{X}$ be the double cover of the singular
surface $\widetilde{Y}$ branched along the divisor $B$. Note that
the surface $\widetilde{X}$ is a normal projective surface with
four singularities of class $T$ whose resolution graphs consist of
two $C_{8,5}$'s and two $C_{7,4}$'s. The ramification divisor in
$\widetilde{X}$ consists of the four disjoint $(-1)$-curves $R_1,
\dotsc, R_4$. Let $\widetilde{\phi} : \widetilde{X} \to
\widetilde{Y}$ be the double covering. On the other hand the
surface $\widetilde{X}$ can be obtained from the rational surface
$V$ by contracting the two $C_{8,5}$'s and two $C_{7,4}$'s. Let
$\widetilde{\beta}: V \to \widetilde{X}$ be the contraction
morphism.

Let $X$ be the surface obtained by blowing down the four
$(-1)$-curves $R_1, \dotsc, R_4$ from $\widetilde{X}$. We denote
the blowing-down morphism by $\beta: \widetilde{X} \to X$. Then
there is a double covering $\phi: X \to Y$ branched along the four
ordinary double points $q_1, \dotsc, q_4$. Finally, let $X'$ be
the contraction of the $(-2)$-curves $B'_8$ and $B'_6$ in $X$. Let
$\beta': V \to X'$ be the contraction. We then get a double covering $X' \to Y'$. To sum up, we have the
following commutative diagram:
\begin{equation*}%\label{equation:diagram-XY}
\xymatrix{
E(1)'  & V' \ar[l] \ar[d]^{\psi'} & V \ar[l] \ar[d]^{\psi} \ar[r]^{\widetilde{\beta}} & \widetilde{X} \ar[d]^{\widetilde{\phi}} \ar[r]^{\beta} & X \ar[d]^{\phi} \ar[r] & X' \ar[d]\\
 & E(1) & \ar[l] Z \ar[r]^{\widetilde{\alpha}} & \widetilde{Y} \ar[r]^{\alpha} & Y \ar[r] & Y'}
\end{equation*}

We will show in Section~\ref{section:obstruction} the
obstruction spaces to local-to-global deformations of the singular
surfaces $\widetilde{X}$, $X$ and $X'$ vanish:

\begin{theorem}\label{theorem:H2(TX)=0}
$H^2(\widetilde{X},
\sheaf{T_{\widetilde{X}}})=0$, $H^2(X, \sheaf{T_X})=0$, and
$H^2(X',\sheaf{T_{X'}})=0$.
\end{theorem}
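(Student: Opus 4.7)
The plan is to argue exactly as in the proof of Theorem~\ref{theorem:H2(TY)=0}, applying the generalized Burns--Wahl comparison of Theorem~\ref{theorem:Burns-Wahl} to each of $\widetilde{X}$, $X$, $X'$ in turn and reducing the vanishing of $H^2$ of the tangent sheaf to a cohomological statement on the smooth rational surface $V$. The resolution morphisms $\widetilde{\beta}\colon V\to\widetilde{X}$, $\beta\circ\widetilde{\beta}\colon V\to X$, and $\beta'\colon V\to X'$ contract disjoint configurations of smooth rational curves representing the singularities of class $T$: two $C_{8,5}$ and two $C_{7,4}$ for $\widetilde{X}$; the same chains together with the four $(-1)$-curves $R_1,\dotsc,R_4$ for $X$; and in addition $B_6'$ and $B_8'$ for $X'$. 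In each case, Theorem~\ref{theorem:Burns-Wahl} controls $H^2$ of the tangent sheaf of the singular surface in terms of $H^2$ on $V$ of an explicit sheaf built from $\sheaf{T_V}$ and the exceptional configuration, together with the local contributions at the class-$T$ singularities.

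Second, I would compute the resulting $H^2$ on $V$ by Serre duality, $H^2(V,\mathcal{F})^{\ast}\cong H^0(V,\mathcal{F}^{\vee}\otimes\omega_V)$. Since $V$ is rational by Proposition~\ref{proposition:CampExplicit}, obtained by blowing up the rational elliptic fibration $E(1)'\to\mathbb{CP}^1$, one can write $\omega_V$ explicitly as a combination of strict transforms of fiber components and exceptional curves (analogous to the $\mathbb{Q}$-numerical expression for $\alpha'^{\ast}(K_{Y'})$ in the proof of Proposition~\ref{propsotion:stable-godeaux}). A component-by-component enumeration of intersection numbers should then force any global section of the dualized sheaf to vanish, yielding the desired $H^2$ vanishing on $V$ and hence on each of $\widetilde{X}$, $X$, $X'$. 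Alternatively, using the double cover $\psi\colon V\to Z$ one can decompose $\psi_{\ast}\sheaf{T_V}$ under the covering involution into $\sigma$-invariant and $\sigma$-anti-invariant summands, identifying the invariant part with a sheaf closely related to $\sheaf{T_Z}(-\log B)$ and the anti-invariant part with a twist of $\sheaf{T_Z}$ by $\mathcal{L}^{-1}$; the invariant part is then controlled by Theorem~\ref{theorem:H2(TY)=0} via the residue sequence, leaving only the anti-invariant part for direct verification on $Z$.

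The main obstacle is the configuration analysis underlying the second step. For $\widetilde{X}$ the argument runs in close parallel with the already-proved Godeaux statement $H^2(\widetilde{Y},\sheaf{T_{\widetilde{Y}}})=0$, and the disjointness of the branch divisor $B$ from the class-$T$ singularities of $\widetilde{Y}$ keeps the local bookkeeping clean. The real difficulty lies in tracking how the subsheaf of $\sheaf{T_V}$ produced by Theorem~\ref{theorem:Burns-Wahl} changes when one passes from $\widetilde{X}$ to $X$ (four new $(-1)$-curves contracted) and then to $X'$ (two further $(-2)$-curves contracted), and in verifying that in each case the Serre-dual $H^0$ on $V$ still vanishes. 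Once this configuration analysis is carried out, the three vanishings in Theorem~\ref{theorem:H2(TX)=0} should follow uniformly, just as the three vanishings in Theorem~\ref{theorem:H2(TY)=0} do.
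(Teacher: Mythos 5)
Your first (main) strategy is exactly what the paper does: its entire proof of Theorem~\ref{theorem:H2(TX)=0} consists of invoking the explicit model of $V$ as a blow-up of the rational elliptic surface $E(1)'$ from Proposition~\ref{proposition:CampExplicit} and rerunning the proof of Theorem~\ref{theorem:H2(TY)=0} verbatim, with the two $I_2$-fibers of $E(1)'$ playing the role of the two nodal fibers $F_1,F_2$. The only caveats are cosmetic: the passage from the singular surfaces to log-tangent cohomology on the resolution is Proposition~\ref{proposition:Lee-Park-Log} (Theorem~\ref{theorem:Burns-Wahl} and Propositions~\ref{proposition:Flenner-Zaidenberg} and~\ref{add-delete} are only used to adjust the log divisor), and the $(-1)$-curves $R_i$ create no singularities of $X$, so they enter only through Proposition~\ref{add-delete}.
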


The singular surface $X'$ is the stable model of $\widetilde{X}$ and $X$:

\begin{proposition}\label{proposition:stable-campedelli}
The surface $X'$ has $K_{X'}^2=2$, $p_g(X')=0$, and $K_{X'}$
ample. The space $\Defq(X')$ is smooth and $6$ dimensional. A
$\mathbb{Q}$-Gorenstein smoothing of $X'$ is a simply connected canonical
surface of general type with $p_g=0$ and $K^2=2$.
\end{proposition}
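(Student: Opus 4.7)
The plan is to mirror the proof of Proposition~\ref{propsotion:stable-godeaux}, with $V$ playing the role of $Z$ and $X'$ playing the role of $Y'$. The key extra ingredient is that the double cover $\phi\colon X'\to Y'$ is \'etale in codimension one: its branching takes place only at the two isolated points of $Y'$ above which the two $A_1$ singularities of $X'$ sit, and therefore $K_{X'}=\phi^{\ast}K_{Y'}$ as $\mathbb{Q}$-Cartier divisors. This immediately gives $K_{X'}^2=\deg(\phi)\cdot K_{Y'}^2=2\cdot 1=2$. As a cross-check, the minimal resolution $\widehat V$ of $X'$ is obtained from $V$ by contracting the four ramification $(-1)$-curves $R_1,\dots,R_4$; since $V$ is rational by Proposition~\ref{proposition:CampExplicit} and $e(V)=2e(Z)-e(B)=2\cdot 19-8=30$, one gets $K_V^2=-18$ and hence $K_{\widehat V}^2=-14$, and the six singularities of $X'$ contribute $0+0+4+4+4+4=16$ to $\sum_p(s_p-\mu_p)$, confirming $K_{X'}^2=2$. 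The rationality of $\widehat V$ together with the rationality of the singularities of $X'$ then yields $p_g(X')=q(X')=0$.

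Ampleness of $K_{X'}$ follows at once from the identity $K_{X'}=\phi^{\ast}K_{Y'}$, ampleness of $K_{Y'}$ (Proposition~\ref{propsotion:stable-godeaux}), and the fact that the pullback of an ample divisor under a finite morphism is ample. Consequently, any $\mathbb{Q}$-Gorenstein smoothing of $X'$ over a small disk is a canonical surface of general type. Smoothness of $\Defq(X')$ follows from Theorem~\ref{theorem:H2(TX)=0} together with Hacking~\cite[\S3]{Hacking}. For a $\mathbb{Q}$-Gorenstein smoothing $\mathcal{X}'\to\Delta$, flatness of $\mathcal{T}_{\mathcal{X}'|\Delta}$ and upper semicontinuity give $H^2(X'_t,\mathcal{T}_{X'_t})=0$, and Hirzebruch-Riemann-Roch applied to the surface of general type $X'_t$ yields
\[
h^1(X'_t,\mathcal{T}_{X'_t})=10\chi(\mathcal{O}_{X'_t})-2K_{X'_t}^2=10-4=6,
\]
so $\Defq(X')$ is $6$-dimensional.

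Simply connectedness of a smoothing $X'_t$ would be proved by adapting the transversal-sphere argument of Proposition~\ref{propsotion:stable-godeaux}. The surface $V$ carries two lifts of $E_3\subset Z$ under $\psi\colon V\to Z$, each of which is a transversal sphere joining one of the two $\frac{1}{8^2}(1,8\cdot 5-1)$ singularities of $X'$ to one of the two $\frac{1}{7^2}(1,7\cdot 4-1)$ singularities. Since $\gcd(64,49)=1$, the normal circles around both members of each such pair become null-homotopic in $X'_t$, exactly as in the Godeaux case. The two $A_1$ singularities contribute no new generator, since the Milnor fiber of an $A_1$ (diffeomorphic to $T^{\ast}S^2$) is simply connected. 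A van Kampen argument following Y.~Lee-J.~Park~\cite{Lee-Park-K^2=2} then gives $\pi_1(X'_t)=1$. I expect this last step to be the main technical obstacle: one has to arrange the two pairs of transversal-sphere arguments so they kill all four independent T-singularity generators simultaneously, and verify that the lens-space boundaries of the two $A_1$ singularities are also killed in the amalgamation; the other three ingredients run parallel to Proposition~\ref{propsotion:stable-godeaux} once the identity $K_{X'}=\phi^{\ast}K_{Y'}$ is in hand.
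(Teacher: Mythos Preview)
Your proof is correct, and it takes a genuinely different route from the paper's. The paper simply reruns the argument of Proposition~\ref{propsotion:stable-godeaux} in the explicit model of $V$ obtained by blowing up $E(1)'$ (Proposition~\ref{proposition:CampExplicit}), in particular verifying ampleness of $K_{X'}$ by an intersection-theoretic computation parallel to the one done for $\alpha'^{\ast}(K_{Y'})$. You instead exploit the double cover $\phi\colon X'\to Y'$: once one checks that $\phi$ is \'etale in codimension one (the two $A_3$ points of $Y'$ are covered by the two $A_1$ points of $X'$ via the quotient $\frac{1}{2}(1,1)\to\frac{1}{4}(1,3)$, and $\phi$ is unramified elsewhere since the branch locus of $\phi_t\colon X\to Y$ is the four $A_1$ points), the identity $K_{X'}=\phi^{\ast}K_{Y'}$ gives $K_{X'}^2=2$ and ampleness of $K_{X'}$ for free from Proposition~\ref{propsotion:stable-godeaux}. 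This is cleaner than repeating the nef-and-big check, and your cross-check via $e(V)$ and the formula $K_{X'}^2=K_{\widehat V}^2+\sum_p(s_p-\mu_p)$ is also correct.

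For the remaining items (smoothness of $\Defq(X')$ from Theorem~\ref{theorem:H2(TX)=0}, dimension $6$ from Hirzebruch--Riemann--Roch on a general fiber) your argument coincides with what the paper intends. Your treatment of simple connectedness---lifting the transversal sphere $E_3$ to two disjoint copies in $V$, each linking a $\frac{1}{64}(1,39)$ to a $\frac{1}{49}(1,27)$, and observing that the $A_1$ Milnor fibers are simply connected---is along the right lines and you are right to flag it as the point requiring care. The paper would carry this out directly in the $E(1)'$ model, which makes the bookkeeping (which preimages split, which curves meet which chains) more transparent; your lifting approach works, but you should verify explicitly that $E_3$ is disjoint from the branch locus $B$ and that its preimage indeed splits, and that the two resulting spheres hit all four $T$-singularities.
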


\begin{proof}
The proof goes as the one for $Y'$ in Proposition~\ref{propsotion:stable-godeaux}, using the explicit model we have for $V$ by blowing-up $E(1)'$ in Proposition~\ref{proposition:CampExplicit}.
One can check that an intersection computation as in Proposition~\ref{propsotion:stable-godeaux} verifies ampleness for $K_{X'}$.
\end{proof}

The proof of the next main result follows easily from Theorem~\ref{theorem:Q-smoothing-of-Y}.

\begin{theorem}\label{theorem:campedelli}
There exist $\mathbb{Q}$-Gorenstein smoothings
$\widetilde{\mathcal{X}} \to \Delta$ of $\widetilde{X}$ and
$\mathcal{X} \to \Delta$ of $X$ that are compatible with the
$\mathbb{Q}$-Gorenstein deformations of $\widetilde{\mathcal{Y}}
\to \Delta$ of $\widetilde{Y}$ and $\mathcal{Y} \to \Delta$ of $Y$
in Theorem~\ref{theorem:Q-smoothing-of-Y}, respectively;
that is, the double coverings $\widetilde{\phi}: \widetilde{X} \to
\widetilde{Y}$ and $\phi: X \to Y$ extend to the double coverings
$\widetilde{\phi}_t: \widetilde{X}_t \to \widetilde{Y}_t$ and
$\phi_t: X_t \to Y_t$ between the fibers of the
$\mathbb{Q}$-Gorenstein deformations.
\end{theorem}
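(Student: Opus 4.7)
The plan is to construct $\widetilde{\mathcal{X}} \to \Delta$ directly as a relative double cover of the family $\widetilde{\mathcal{Y}} \to \Delta$, and then to obtain $\mathcal{X} \to \Delta$ from it by simultaneously contracting the relative ramification divisor. The essential input, supplied by the final paragraph of the proof of Theorem~\ref{theorem:Q-smoothing-of-Y}, is that the $2$-divisibility relation $B \sim 2L$ on $\widetilde{Y}$ extends to $\mathcal{B} \sim 2\mathcal{L}$ on the total space $\widetilde{\mathcal{Y}}$, where $\mathcal{L}$ is a genuine line bundle (a consequence of Manetti's lemma applied to $\widetilde{\mathcal{Y}} \to \Delta$). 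I expect the global lift of $L$ to be the only subtle step, since once $\mathcal{L}$ and $\mathcal{B}$ are given on the total space, everything else is formal.

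Concretely, I would set
\[
\widetilde{\mathcal{X}} := \mathbf{Spec}_{\widetilde{\mathcal{Y}}}\bigl(\mathcal{O}_{\widetilde{\mathcal{Y}}} \oplus \mathcal{L}^{-1}\bigr),
\]
with algebra structure determined by the section of $\mathcal{L}^{\otimes 2}$ cutting out $\mathcal{B}$. The resulting morphism $\widetilde{\pi} : \widetilde{\mathcal{X}} \to \widetilde{\mathcal{Y}}$ restricts on the central fiber to $\widetilde{\phi} : \widetilde{X} \to \widetilde{Y}$, and for $t \neq 0$, since $\mathcal{B}_t$ is smooth and contained in the smooth locus of $\widetilde{\mathcal{Y}}_t$ by Theorem~\ref{theorem:Q-smoothing-of-Y}, the fiber $\widetilde{X}_t$ is smooth. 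Flatness of $\widetilde{\mathcal{X}} \to \Delta$ is automatic from the double cover construction.

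To verify that $\widetilde{\mathcal{X}} \to \Delta$ is a genuine $\mathbb{Q}$-Gorenstein smoothing of $\widetilde{X}$, I would observe that $\mathcal{B}$ is disjoint from the two $T$-singularities of $\widetilde{\mathcal{Y}}$; near each such point $\widetilde{\pi}$ is \'etale and consists of two disjoint copies of the local model. Consequently each of the four $T$-singularities of $\widetilde{X}$ --- the two $\frac{1}{8^2}(1,8\cdot 5-1)$ and the two $\frac{1}{7^2}(1,7\cdot 4-1)$ arising from the two copies each of $C_{8,5}$ and $C_{7,4}$ in $V$ --- inherits its own one-parameter $\mathbb{Q}$-Gorenstein smoothing pulled back from the corresponding smoothing of $p_1$ or $p_2$ in $\widetilde{\mathcal{Y}}$.

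Finally, to produce $\mathcal{X} \to \Delta$, I would simultaneously contract the four horizontal smooth rational curves $\mathcal{R}_1,\dotsc,\mathcal{R}_4$ making up the ramification divisor $\widetilde{\pi}^{-1}(\mathcal{B}) \subset \widetilde{\mathcal{X}}$. Each $\mathcal{R}_i$ restricts on every fiber to a $(-1)$-curve (the double cover of the $(-2)$-curve $B_{i,t}$) contracted by $\beta : \widetilde{X} \to X$ on the central fiber, and the simultaneous contraction is compatible with the contraction $\widetilde{\mathcal{Y}} \to \mathcal{Y}$ of the $B_{i,t}$ provided by Theorem~\ref{theorem:Q-smoothing-of-Y}(b). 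The resulting commutative square of families extends the original square of central fibers, so $\widetilde{\phi}$ and $\phi$ extend fiberwise to the desired double coverings $\widetilde{\phi}_t : \widetilde{X}_t \to \widetilde{Y}_t$ and $\phi_t : X_t \to Y_t$.
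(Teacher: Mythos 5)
Your proposal is correct and matches the paper's intent: the paper gives no separate argument, stating only that the result ``follows easily from Theorem~\ref{theorem:Q-smoothing-of-Y},'' precisely because the relation $\mathcal{B}\sim 2\mathcal{L}$ on the total space $\widetilde{\mathcal{Y}}$ (obtained there via Manetti's lemma) is exactly what is needed to form the relative double cover $\mathbf{Spec}_{\widetilde{\mathcal{Y}}}(\mathcal{O}_{\widetilde{\mathcal{Y}}}\oplus\mathcal{L}^{-1})$ and then contract the ramification divisor fiberwise. You have simply written out the details the authors left implicit.
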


\begin{remark}
By Theorem~\ref{theorem:H2(TX)=0}, the obstruction $H^2(X, \sheaf{T_X})$ to local-to-global deformations of the singular surface $X$ vanishes. The point of the above theorem is that there is a $\mathbb{Q}$-Gorenstein smoothing of the cover $X$ that is compatible with the $\mathbb{Q}$-Gorenstein deformation of the base $Y$.
\end{remark}

\begin{corollary}\label{corollary:Campedelli-with-involution}
A general fiber $X_t$ of the $\mathbb{Q}$-Gorenstein smoothing $\mathcal{X} \to \Delta$ of $X$ is a simply connected numerical Campedelli surface with an involution $\sigma_t$ such that the minimal resolution of the quotient $Y_t=X_t/\sigma_t$ is a simply connected numerical Godeaux surface.
\end{corollary}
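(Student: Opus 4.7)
The plan is to assemble the corollary from the machinery already in place: Theorem~\ref{theorem:campedelli} supplies compatible $\mathbb{Q}$-Gorenstein smoothings $\mathcal{X} \to \Delta$ and $\mathcal{Y} \to \Delta$ equipped with a family of double covers, while Propositions~\ref{propsotion:stable-godeaux} and~\ref{proposition:stable-campedelli} already pin down simple connectivity of the smooth fibers of smoothings of the stable models $Y'$ and $X'$. The first step is to extract the involution: by Theorem~\ref{theorem:campedelli}, the double cover $\phi \colon X \to Y$ extends to a double cover $\phi_t \colon X_t \to Y_t$ of the general fibers, and its Galois involution furnishes the required order-$2$ automorphism $\sigma_t$ of $X_t$ with $X_t/\sigma_t = Y_t$. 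Since Theorem~\ref{theorem:Q-smoothing-of-Y}(b) asserts that $Y_t$ ($t \neq 0$) has only four ordinary double points as singularities, its minimal resolution is canonically identified with the smooth fiber $\widetilde{Y}_t$ of the smoothing $\widetilde{\mathcal{Y}} \to \Delta$ of $\widetilde{Y}$.

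Next I would verify that $X_t$ and $\widetilde{Y}_t$ are minimal surfaces of general type with the right numerical invariants. The equalities $K_{X_t}^2 = 2$, $p_g(X_t) = 0$ and $K_{\widetilde{Y}_t}^2 = 1$, $p_g(\widetilde{Y}_t) = 0$ are constant in the flat families and coincide with the values computed in Propositions~\ref{proposition:stable-campedelli} and~\ref{propsotion:stable-godeaux} for the stable models $X'$ and $Y'$; the general-type condition is preserved because those stable models already have ample canonical class. For minimality, the only curves of negative self-intersection persisting from the central fibers are $(-2)$-curves (the two curves $B'_6, B'_8$ on $X_t$, and the four curves $B_{1,t}, \ldots, B_{4,t}$ on $\widetilde{Y}_t$ produced by Theorem~\ref{theorem:Q-smoothing-of-Y}(a)), so no $(-1)$-curves appear on the generic fibers.

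For simple connectivity, I would argue that the generic fibers $X_t$ and $\widetilde{Y}_t$ are diffeomorphic to generic fibers of $\mathbb{Q}$-Gorenstein smoothings of the canonical models $X'$ and $Y'$ respectively. The morphism $X \to X'$ contracts the two $(-2)$-curves $B'_6, B'_8$ to $A_1$-points, and $\widetilde{Y} \to Y'$ contracts two $A_3$-chains of $(-2)$-curves; since the minimal resolution of any rational double point is diffeomorphic to its Milnor fiber, smoothing $X$ is topologically indistinguishable from smoothing $X'$, and similarly for $\widetilde{Y}$ and $Y'$. Propositions~\ref{proposition:stable-campedelli} and~\ref{propsotion:stable-godeaux} then yield $\pi_1(X_t) = \pi_1(\widetilde{Y}_t) = 1$. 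The main obstacle is precisely this diffeomorphism argument: carefully bridging our smoothings of the partial resolutions $X$ and $\widetilde{Y}$ with smoothings of the stable models $X', Y'$ at the level of smooth four-manifold topology, so that the fundamental-group computations of the earlier propositions transfer verbatim.
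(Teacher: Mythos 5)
Your assembly matches the paper's: the corollary is stated there without proof, as an immediate consequence of Theorem~\ref{theorem:campedelli} together with Propositions~\ref{propsotion:stable-godeaux} and~\ref{proposition:stable-campedelli}, and your bridge between the smoothings of $X$, $\widetilde{Y}$ and those of the stable models $X'$, $Y'$ (via the standard diffeomorphism between the Milnor fiber and the minimal resolution of a rational double point) is a legitimate way to make the $\pi_1$ and minimality transfer explicit. Note only that for the simple connectivity of the resolution of the quotient the paper supplies Proposition~\ref{proposition:simply-connected=>simply-connected}, which derives it directly from $\pi_1(X_t)=1$, so your separate diffeomorphism argument for $\widetilde{Y}_t$ is not actually needed.
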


\subsection{The fundamental group of the quotient by an involution}

Let $X$ be a minimal complex surface of general type with $p_g=0$
and $K^2=2$. Suppose that the group $\mathbb{Z}/2\mathbb{Z}$ acts
on $X$ with just $4$ fixed points. Let
$Y=X/(\mathbb{Z}/2\mathbb{Z})$ be the quotient and let $S \to Y$
be the minimal resolution of $Y$.
Barlow~\cite[Proposition~1.3]{Barlow-Invent} proved that if
$\pi_1^{\text{alg}}(S)=1$ then
$\abs{\pi_1^{\text{alg}}(X)}=1,3,5,7,9$. Conversely:

\begin{proposition}\label{proposition:simply-connected=>simply-connected}
If $X$ is simply connected, then so is $S$.
\end{proposition}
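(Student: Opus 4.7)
The plan is to reduce the statement to a direct application of Armstrong's theorem on the fundamental group of a quotient. First, I would analyze the local structure of $\sigma$ at its four fixed points: since $\sigma$ is a holomorphic involution with an isolated fixed point $p$, the differential $d\sigma_p$ must act as $-1$ on the whole tangent space $T_pX$ (a $+1$-eigenvector would produce a positive-dimensional fixed locus through $p$), and by holomorphic linearization of finite group actions, $\sigma$ is analytically conjugate near $p$ to $(x,y)\mapsto(-x,-y)$. In particular, the four fixed points map to four ordinary double points of $Y=X/\sigma$, and $S\to Y$ is the minimal resolution replacing each by a single $(-2)$-curve.

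Next, I would pass to a blow-up on which the involution becomes ramified only along divisors. Let $\widetilde{X}\to X$ be the blow-up at the four fixed points, with exceptional $(-1)$-curves $R_1,\dotsc,R_4$; then $\widetilde{X}$ is smooth and $\pi_1(\widetilde{X})=\pi_1(X)=1$ (topologically the blow-up is a connect-sum of $X$ with copies of $\overline{\mathbb{CP}^2}$, which does not change $\pi_1$ by Seifert--van Kampen). The involution $\sigma$ lifts to an involution $\widetilde{\sigma}$ on $\widetilde{X}$; in local coordinates $(x,u=y/x)$ near a point of $R_i$, the lift reads $(x,u)\mapsto(-x,u)$, so $\widetilde{\sigma}$ fixes each $R_i$ pointwise and acts freely off $\bigcup R_i$. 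Hence $\widetilde{X}/\widetilde{\sigma}$ is smooth, and a routine pullback computation (using that $R_i$ is the reduced preimage of its image and that $R_i^2=-1$) identifies this quotient with the minimal resolution $S$, the image curves being $(-2)$-curves.

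Finally, I invoke Armstrong's theorem (Armstrong, \emph{Proc.\ Cambridge Phil.\ Soc.}, 1968): if a group $G$ acts properly discontinuously by homeomorphisms on a simply connected, locally compact metric space $M$, then $\pi_1(M/G)\cong G/N$, where $N\triangleleft G$ is the normal subgroup generated by elements with at least one fixed point. Applied with $M=\widetilde{X}$ and $G=\langle\widetilde{\sigma}\rangle\cong\mathbb{Z}/2\mathbb{Z}$, the nontrivial element fixes each $R_i$, so $N=G$ and $\pi_1(S)=\pi_1(\widetilde{X}/\widetilde{\sigma})=1$. There is no serious obstacle in this strategy; the main verifications are the standard local analysis at the fixed points and the identification $\widetilde{X}/\widetilde{\sigma}\cong S$. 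As an alternative avoiding Armstrong, one could run a direct van Kampen argument on $\widetilde{X}\to S$ viewed as a smooth branched double cover along the disjoint curves $E_i$: every meridian around $R_i$ is trivial in $\pi_1(\widetilde{X})=1$, and combining this with the van Kampen presentation $\pi_1(S)=\pi_1(S\setminus\bigcup E_i)/\langle\!\langle m_i\rangle\!\rangle$ again forces $\pi_1(S)=1$.
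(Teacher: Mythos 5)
Your proof is correct, but it takes a different route from the paper's. The paper works directly on the singular quotient $Y=X/\sigma$: it removes the four nodes to get an \'etale double cover $X_0\to Y_0$, deduces $\pi_1(Y_0)=\mathbb{Z}/2\mathbb{Z}$ from $\pi_1(X_0)=\pi_1(X)=1$, identifies a generator with the standard loop in the lens space $L(2,1)$ bounding a node, kills it by van Kampen when the cones over the lens spaces are glued back in, and finally notes that resolving the four $A_1$'s does not change $\pi_1$. You instead blow up the four fixed points first, so that the lifted involution $\widetilde{\sigma}$ fixes the exceptional $(-1)$-curves pointwise, identify $\widetilde{X}/\widetilde{\sigma}$ with $S$ (your local model $(x,u)\mapsto(-x,u)$ and the computation $E_i^2=2R_i^2=-2$ are right), and then quote Armstrong's theorem, which applies since a finite group acting on a compact metric space acts discontinuously and the nontrivial element has fixed points. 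What your route buys is twofold: the general theorem replaces the explicit lens-space/van Kampen bookkeeping, and realizing $S$ itself as the quotient of the simply connected $\widetilde{X}$ removes the need for the final step relating $\pi_1(S)$ to $\pi_1(Y)$. What the paper's hands-on argument buys is self-containedness (no appeal to Armstrong) and it is, in effect, the proof of the special case of Armstrong's theorem being used; your closing van Kampen sketch on $\widetilde{X}\to S$ is essentially that same computation transplanted to the resolution. One could even apply Armstrong directly to $X$ and $\sigma$ to get $\pi_1(Y)=1$, but then one would still owe the paper's last line about the resolution, which your blow-up formulation avoids.
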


\begin{proof}
Let $f: X \to Y$ be the quotient map. Then $f$ is a double
covering which is branched along the four ordinary double points of $Y$. Let $Y_0 \subset Y$ be the complement of the four branch points (i.e., the four $A_1$-singularities) of $Y$ and let $X_0=f^{-1}(Y_0)$, that is, $X_0 \subset X$ is the complement of the four fixed points of the involution $\sigma$. Then we get an \'etale double covering $f|_{X_0}: X_0 \to Y_0$. Since $\pi_1(X_0)=\pi_1(X)=1$, we have $\pi_1(Y_0) = \mathbb{Z}/2\mathbb{Z}$. Note that the boundary $\partial U$ of an arbitrary small neighborhood $U$ of one of the four nodes of $Y$ is a Lens space $L(2,1)$. Let $[\gamma]$ be a generator of $\pi_1(\partial U) \cong \mathbb{Z}/2\mathbb{Z}$ represented by a loop $\gamma$ contained in $\partial U$. Since the lifting of $\gamma$ by the covering $f|_{X_0}: X_0 \to Y_0$ is not a closed path and $\pi_1(Y_0)=\mathbb{Z}/2\mathbb{Z}$, $\pi_1(Y_0)$ is generated by $[\gamma]$. Then it follows by van Kampen theorem that $\pi(Y)$ is trivial. Hence $\pi_1(S)$ is trivial because $S$ is obtained from $Y$ by resolving only four $A_1$-singularities.

%Suppose that $\pi_1(S)=\mathbb{Z}/2\mathbb{Z}$. Then there is an
%\'etale double covering $h: U \to S$ such that $U$ is simply
%connected and connected. Then the fiber product of $T$ and $U$
%over $S$ is an \'etale double cover of $T$. Let $\phi: T \times_Y
%U \to U$ be the double covering, which is branched along the eight
%$(-2)$-curves $h^{\ast}(\sum_{i=1}^{4}{R_i})$.  That is, we have
%the following commutative diagram:
%%
%\begin{equation*}
%\xymatrix{
%X \ar[d]_{f} & T \ar[d]_{g} \ar[l] & T \times_Y U \ar[d]_{\phi} \ar[l]^{2:1}_{\text{\'etale}} \\
%Y & S \ar[l] & U \ar[l]_{h}
%}
%\end{equation*}
%
%Set $U_0 = U \setminus h^{\ast}(\sum_{i=1}^{4}{R_i})$. Since $h:
%U_0 \to Y_0$ is \'etale and $\pi_1(Y_0) = \mathbb{Z}/2\mathbb{Z}$,
%it follows that $\pi_1(U_0)=1$. Furthermore, since $T$ is simply
%connected, the fiber product $T \times_Y U$ is diffeomorphic to
%the disjoint union $T \sqcup T$. Then $\phi: X_0 \sqcup X_0 \to
%U_0$ is an \'etale double covering and $\pi_1(U_0)=1$ and $U_0$ is
%connected. Therefore $X_0 \sqcup X_0$ is diffeomorphic to $U_0
%\sqcup U_0$. Note that $\partial X_0$ is diffeomorphic to the disjoint union of $3$-spheres $S^3$. However $\partial U_0$ is diffeomorphic to the disjoint union of the lens space $L(2,1)$, which is a contradiction.
\end{proof}

%---------------------------------------------------------------------------------------------------------
\section{The obstruction spaces to local-to-global deformations}
\label{section:obstruction}

In this section we prove Theorem~\ref{theorem:H2(TY)=0} which says
that the obstruction spaces to local-to-global deformations of the
singular surfaces $\widetilde{Y}$, $Y$, and $Y'$ vanish. That is,
we will prove that $H^2(\widetilde{Y},
\sheaf{T_{\widetilde{Y}}})=H^2(Y, \sheaf{T_Y})=H^2(Y',
\sheaf{T_{Y'}})=0$. At the end, we also prove the analogues,
Theorem~\ref{theorem:H2(TX)=0}, for $\widetilde{X}$, $X$, and
$X'$.

At first the vanishing of the obstruction
spaces of a singular surface can be proved by the vanishing of the second cohomologies
of a certain logarithmic tangent sheaf on the minimal resolution
of the singular surface:

\begin{proposition}[{Y.~Lee-J.~Park~\cite[Theorem~2]{Lee-Park-K^2=2}}]\label{proposition:Lee-Park-Log}
If $\pi: T \to S$ be the minimal resolution of a
normal projective surface $S$ with only quotient singularities,
and $D$ is the reduced exceptional divisor of the resolution
$\pi$, then $h^2(S, \sheaf{T_S}) = h^2(T,
\sheaf{T_{T}}(-\log{D}))$.
\end{proposition}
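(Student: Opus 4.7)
The plan is to apply the Leray spectral sequence to $\pi\colon T \to S$, reducing the statement to two local assertions near each quotient singularity of $S$: (i) $\pi_* \sheaf{T_T}(-\log D) = \sheaf{T_S}$ and (ii) $R^i\pi_* \sheaf{T_T}(-\log D) = 0$ for $i \geq 1$. Since the fibers of $\pi$ have dimension at most $1$, only the case $i=1$ is nontrivial. Both assertions may be verified analytic-locally, where $S$ is the germ $\mathbb{C}^2/G$ and $T$ is its Hirzebruch--Jung resolution with exceptional chain $D = D_1 + \cdots + D_r$ of $\mathbb{P}^1$'s whose self-intersections are read off from the continued fraction expansion of the type of the singularity.

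For (i), I would argue by reflexivity. The sheaf $\sheaf{T_S} = \sheaf{Hom}_{\sheaf{O_S}}(\Omega_S^1,\sheaf{O_S})$ is reflexive as a dual. The sheaf $\pi_*\sheaf{T_T}(-\log D)$ is coherent and torsion-free, and it agrees with $\sheaf{T_S}$ over $S \setminus \mathrm{Sing}(S)$ because $\pi$ is an isomorphism there and the log condition is vacuous outside $D$. Since $S$ is a normal surface, any two coherent reflexive sheaves that coincide in codimension one coincide everywhere. Alternatively, locally writing $S = \mathbb{C}^2/G$, one identifies both sheaves with $G$-invariant vector fields on $\mathbb{C}^2$, the log condition along $D$ being precisely the translation of $G$-equivariance to the resolution.

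The hard part will be (ii). By the theorem on formal functions applied to $\pi$, the vanishing $R^1\pi_* \sheaf{T_T}(-\log D) = 0$ reduces to $\varprojlim_n H^1(nD, \sheaf{T_T}(-\log D)|_{nD}) = 0$. A standard induction using the filtration by powers of the ideal of $D$ in $T$ further reduces this to the vanishing of $H^1\bigl(D, (\sheaf{T_T}(-\log D) \otimes \sheaf{O_T}(-kD))|_D\bigr)$ for all $k \geq 0$. The restriction $\sheaf{T_T}(-\log D)|_D$ sits in the residue-type exact sequence
\[
0 \to \sheaf{T_T}(-\log D) \to \sheaf{T_T} \to \sheaf{N} \to 0,
\]
where $\sheaf{N}$ is a coherent sheaf supported on $D$ encoding the normal directions along the components of $D$, and a direct \v{C}ech computation along the chain of $\mathbb{P}^1$'s then yields the needed vanishing, using crucially that every self-intersection $-b_i$ satisfies $b_i \geq 2$.

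With (i) and (ii) in place, the Leray spectral sequence
\[
E_2^{p,q} = H^p(S, R^q\pi_*\sheaf{T_T}(-\log D)) \;\Longrightarrow\; H^{p+q}(T, \sheaf{T_T}(-\log D))
\]
collapses at the $E_2$-page and yields $H^i(T, \sheaf{T_T}(-\log D)) \cong H^i(S, \sheaf{T_S})$ for every $i$; the case $i=2$ is the claimed equality of dimensions.
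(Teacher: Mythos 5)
Your overall strategy --- Leray for $\pi$ plus the two local statements $\pi_*\sheaf{T_T}(-\log D)=\sheaf{T_S}$ and $R^1\pi_*\sheaf{T_T}(-\log D)=0$ --- is the standard one; note that the paper itself offers no proof of this proposition (it is quoted from \cite[Theorem~2]{Lee-Park-K^2=2}), so the comparison is really against the cited source, whose argument is essentially the one you outline. But two of your steps have real gaps. In (i), the reflexivity shortcut fails as written: the pushforward of a locally free sheaf under a resolution is torsion-free but need not be reflexive (already $\pi_*\sheaf{O_T}(-D)$ is the non-reflexive maximal ideal of the singular point for an $A_1$), so ``torsion-free and agreeing with $\sheaf{T_S}$ in codimension one'' does not give equality. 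What does work is your parenthetical alternative: the natural inclusion $\pi_*\sheaf{T_T}(-\log D)\hookrightarrow\sheaf{T_S}$ together with the local identification of $\sheaf{T_S}$ with $G$-invariant vector fields on $\mathbb{C}^2$ and the fact that these lift to the minimal resolution tangent to $D$. That should be the proof, not an aside.

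The more serious gap is (ii), which you correctly flag as the hard part but do not actually carry out. Your local model is a Hirzebruch--Jung chain, i.e.\ a cyclic quotient singularity, whereas the proposition allows arbitrary quotient singularities, whose exceptional graphs are star-shaped trees (already the $D_n$ and $E_n$ rational double points). There the proposed filtration by powers of the ideal of $D$ breaks down on the central component $C_0$: one has $D\cdot C_0=C_0^2+3=1>0$, so the twists $\sheaf{O}_{C_0}(-kD)$ have negative degree and the componentwise $H^1$'s no longer all vanish; one must instead use a computation sequence adapted to the fundamental cycle, or argue differently (for instance, in the cyclic case the toric description gives $\sheaf{T_T}(-\log(D+A_1+A_2))\cong\sheaf{O_T}^{\oplus 2}$ for the full boundary including the two non-compact curves $A_i$, and then $R^1\pi_*\sheaf{O_T}=0$ by rationality yields the vanishing). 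Finally, (ii) cannot be skipped: although $R^1\pi_*$ is a skyscraper, so $E_2^{1,1}=E_2^{0,2}=0$, the term $H^2(S,\pi_*\sheaf{T_T}(-\log D))$ still receives the differential $d_2$ from $H^0(S,R^1\pi_*\sheaf{T_T}(-\log D))$, and the isomorphism on $H^2$ genuinely requires the vanishing of $R^1\pi_*$ (or at least of $d_2$).
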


\begin{proposition}[{Flenner-Zaidenberg~\cite[Lemma~1.5]{Flenner-Zaidenberg}}]\label{proposition:Flenner-Zaidenberg}
Let $T$ be a nonsingular surface and let $D$ be a simple normal
crossing divisor in $T$. Let $f : T' \to T$ be the
blow-up of $T$ at a point $p$ of $D$. Let
$D'=f^{*}(D)_{\text{red}}$. Then $h^2(T',
\sheaf{T_{T'}}(-\log{D'}))=h^2(T,
\sheaf{T_{T}}(-\log{D}))$.
\end{proposition}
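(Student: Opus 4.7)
The plan is to use the Leray spectral sequence for the birational morphism $f\colon T' \to T$, which reduces the statement to the two assertions
\begin{enumerate}[(i)]
\item $f_* \sheaf{T_{T'}}(-\log D') = \sheaf{T_{T}}(-\log D)$, and
\item $R^1 f_* \sheaf{T_{T'}}(-\log D') = 0$.
\end{enumerate}
Given these, the spectral sequence degenerates and yields the desired equality of $H^2$. As a preliminary, one checks that $D'$ is again SNC on $T'$: the blow-up transforms either $D = \{x=0\}$ into $\widetilde{D} + E$ (transverse intersection) or $D = \{xy=0\}$ into $\widetilde{D}_1 + \widetilde{D}_2 + E$ (all pairwise intersections transverse), so the two cases split depending on whether $p$ is a smooth point or a node of $D$.

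For claim (i), I would work in the blow-up chart $(s,u)$ given by $x = su$, $y = u$, where $\sheaf{T_{T'}}(-\log D')$ is locally generated by $s\partial_s$ and $u\partial_u$. A direct chain-rule computation gives $s\partial_s = x\partial_x$ and $u\partial_u = x\partial_x + y\partial_y$, so both push forward to sections of $\sheaf{T_{T}}(-\log D)$, and a parallel check on the other chart covers the remaining piece of $E$. Since $f$ is an isomorphism away from $\{p\}$, the pushforward $f_*\sheaf{T_{T'}}(-\log D')$ is reflexive, hence locally free on the smooth surface $T$, and its agreement with the locally free sheaf $\sheaf{T_{T}}(-\log D)$ on the codimension-two complement of $p$ forces equality.

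For claim (ii), $R^1 f_*\sheaf{T_{T'}}(-\log D')$ is supported at $p$ since $f$ is an isomorphism elsewhere, and the theorem on formal functions gives
\begin{equation*}
\bigl(R^1 f_*\sheaf{T_{T'}}(-\log D')\bigr)^{\wedge}_p = \varprojlim_n H^1\bigl(E_n,\; \sheaf{T_{T'}}(-\log D')\otimes \sheaf{O}_{E_n}\bigr),
\end{equation*}
where $E_n$ is the $n$-th infinitesimal neighborhood of $E \cong \mathbb{CP}^1$. The key input is the standard residue sequence for a component of an SNC log divisor,
\begin{equation*}
0 \to \sheaf{T_{E}}(-\log S) \to \sheaf{T_{T'}}(-\log D')\big|_{E} \to \sheaf{O}_E \to 0,
\end{equation*}
with $S := E \cap (D' - E)$ of degree $|S|\in\{1,2\}$ according to the case. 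Then $\sheaf{T_{E}}(-\log S) \cong \sheaf{O}_{\mathbb{CP}^1}(2 - |S|)$ has $H^1 = 0$, and so does $\sheaf{O}_E$, giving $H^1(E, \sheaf{T_{T'}}(-\log D')|_E) = 0$.

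Finally, one propagates this from $E = E_1$ to $E_n$ by induction using
\begin{equation*}
0 \to \sheaf{O}_E(n) \otimes \sheaf{T_{T'}}(-\log D')\big|_E \to \sheaf{T_{T'}}(-\log D')\big|_{E_{n+1}} \to \sheaf{T_{T'}}(-\log D')\big|_{E_n} \to 0,
\end{equation*}
which comes from $I_E^n/I_E^{n+1} \cong \sheaf{O}_{\mathbb{CP}^1}(n)$ (since $E^2 = -1$). Each left-hand term is a positive twist of the vanishing extension above, so its $H^1$ vanishes, closing the induction and giving $R^1 f_*\sheaf{T_{T'}}(-\log D') = 0$. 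The hard part is setting up the residue sequence on $E$ correctly and carrying out the case distinction between a smooth point of $D$ and a node; once this is in hand, the rest is formal application of Leray and formal functions.
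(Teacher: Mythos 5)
Your overall strategy (Leray for $f$, computing $f_*$ and $R^1f_*$, formal functions along $E$) is the standard route behind the cited Lemma~1.5 of Flenner--Zaidenberg; the paper itself quotes the result without proof, so there is nothing internal to compare against. Your claim (ii) and the inductive computation on infinitesimal neighborhoods of $E$ are fine. The problem is claim (i): the equality $f_*\sheaf{T_{T'}}(-\log D')=\sheaf{T_{T}}(-\log D)$ is \emph{false} when $p$ is a smooth point of $D$, and the hypothesis ``a point $p$ of $D$'' allows that case. Your chart computation only proves one inclusion: the generators $s\partial_s$ and $u\partial_u$ push forward to $x\partial_x$ and $x\partial_x+y\partial_y$, which generate $\langle x\partial_x,\,y\partial_y\rangle$; for $D=\{x=0\}$ this is strictly smaller than $\sheaf{T_{T}}(-\log D)=\langle x\partial_x,\,\partial_y\rangle$. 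Indeed $\partial_y$ is a log field downstairs but its lift $-\tfrac{s}{u}\partial_s+\partial_u$ has a pole along $E=\{u=0\}$, so $\partial_y\notin f_*\sheaf{T_{T'}}(-\log D')$; one checks on both charts that $f_*\sheaf{T_{T'}}(-\log D')=\{Px\partial_x+Q\partial_y : Q(p)=0\}$, of colength one in $\sheaf{T_{T}}(-\log D)$. The step that fails is precisely your reflexivity argument: the pushforward of a locally free sheaf under a blow-up is torsion-free but need not be reflexive (e.g.\ $f_*\sheaf{O_{T'}}(-E)=\mathfrak{m}_p$), and here it is not. (In the node case $D=\{xy=0\}$ both $x\partial_x$ and $y\partial_y$ do lift to log fields, so equality genuinely holds there.)

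The gap is repairable without changing the architecture. What you actually need, and what the computation gives, is that $f_*\sheaf{T_{T'}}(-\log D')\hookrightarrow \sheaf{T_{T}}(-\log D)$ has cokernel a skyscraper at $p$ of length $0$ (node case) or $1$ (smooth-point case); since a skyscraper on a surface has no $H^1$ or $H^2$, the long exact sequence still gives $h^2(f_*\sheaf{T_{T'}}(-\log D'))=h^2(\sheaf{T_{T}}(-\log D))$, and combined with $R^1f_*=0$ and Leray this yields the claimed equality of $h^2$. Note the discrepancy is not an artifact of the method: the Euler characteristic genuinely drops by one when blowing up a smooth point of $D$, so no correct argument can produce an isomorphism on all cohomology, only on $H^2$ (which is exactly what the proposition asserts). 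Replace claim (i) by the inclusion-with-skyscraper-cokernel statement and delete the reflexivity step.
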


We can add or remove disjoint $(-1)$-curves.

\begin{proposition} \label{add-delete}
Let $T$ be a nonsingular surface and let $D$ be a simple normal
crossing divisor in $T$. Let $E$ be a $(-1)$-curve in $T$ such
that $D+E$ is again simple normal crossing. Then $h^2(T,
\sheaf{T_{T}}(-\log{(D+E)}))= h^2(T, \sheaf{T_{T}}(-\log{D}))$.
\end{proposition}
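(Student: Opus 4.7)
The plan is to set up the standard short exact sequence comparing the two logarithmic tangent sheaves and then observe that for a $(-1)$-curve the cokernel has no higher cohomology. Specifically, I would first establish
$$0 \to \sheaf{T_T}(-\log(D+E)) \to \sheaf{T_T}(-\log D) \to N_{E/T} \to 0,$$
where the inclusion is the natural one (vector fields tangent to $D+E$ are in particular tangent to $D$).

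To identify the cokernel with $N_{E/T}$ I would check local models in SNC coordinates on $D+E$. At a point of $E\setminus D$, take $E=\{x=0\}$; then locally $\sheaf{T_T}(-\log D) = \langle\partial_x,\partial_y\rangle$ and $\sheaf{T_T}(-\log(D+E)) = \langle x\partial_x,\partial_y\rangle$, giving cokernel $\sheaf{O}_E\cdot\partial_x$. At a node $E\cap D$, with $D=\{y=0\}$ and $E=\{x=0\}$, one has $\sheaf{T_T}(-\log D) = \langle\partial_x,y\partial_y\rangle$ and $\sheaf{T_T}(-\log(D+E)) = \langle x\partial_x,y\partial_y\rangle$, again with cokernel $\sheaf{O}_E\cdot\partial_x$ and \emph{no} additional vanishing at the intersection point. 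The SNC hypothesis on $D+E$ rules out any worse configuration. Under a change of SNC coordinates $x' = ux$ along $E$, the generator $\partial_x$ transforms as the dual of the conormal generator $x$, so these local cokernels glue to $N_{E/T} = \sheaf{O}_E(E)$.

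Now, since $E\cong\mathbb{CP}^1$ with $E^2=-1$, we have $N_{E/T} \cong \sheaf{O}_{\mathbb{CP}^1}(-1)$, so
$$H^1(E, N_{E/T}) = H^2(E, N_{E/T}) = 0.$$
Taking the associated long exact sequence in cohomology, the $H^1$ and $H^2$ of $N_{E/T}$ both vanish, so the connecting map forces an isomorphism
$$H^2(T, \sheaf{T_T}(-\log(D+E))) \cong H^2(T, \sheaf{T_T}(-\log D)),$$
which is the claim.

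The only real technical point is the identification of the cokernel with $N_{E/T}$, and this is a routine local computation that the SNC assumption makes painless. No serious obstacle is expected. One could alternatively derive the same sequence by invoking the residue formalism for log differentials and dualizing, but the direct check above is shorter given that $E$ is a smooth component meeting $D$ transversally.
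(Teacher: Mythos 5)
Your argument is correct. The paper states Proposition~\ref{add-delete} without any proof, and what you supply is exactly the standard argument the authors leave implicit: the short exact sequence $0 \to \sheaf{T_T}(-\log(D+E)) \to \sheaf{T_T}(-\log D) \to N_{E/T} \to 0$, the local check in SNC coordinates that the cokernel is all of $N_{E/T}$ (with no extra vanishing at the points of $E\cap D$, which is the one place a careless guess could go wrong), and the vanishing $H^1(E,N_{E/T})=H^2(E,N_{E/T})=0$ for $N_{E/T}\cong \sheaf{O}_{\mathbb{CP}^1}(-1)$, which kills the connecting maps around $H^2$.
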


We can also add or remove disjoint exceptional divisors of
rational double points. The following theorem may give a new
general way to prove unobstructedness for deformations of
surfaces.

\begin{theorem}\label{theorem:Burns-Wahl}
Let $S$ be a normal projective surface with only rational double
points $q_1, \ldots,q_n$ as singularities. Let $\pi \colon
\widetilde{S} \to S$ be the minimal resolution of $S$ with
exceptional reduced divisor $M=\sum_{i=1}^n \pi^{-1}(q_i)$. Let
$C$ be a simple normal crossing divisor such that $C\cap
M=\varnothing$. Then $h^2(\widetilde{S},
\sheaf{T_{\widetilde{S}}}(-\log(C+M)))=h^2(\widetilde{S},
\sheaf{T_{\widetilde{S}}}(-\log{C}))$.
\end{theorem}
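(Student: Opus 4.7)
The plan is to use the short exact sequence
\[
0 \to \mathcal{T}_{\widetilde{S}}(-\log(C+M)) \to \mathcal{T}_{\widetilde{S}}(-\log C) \to \mathcal{Q} \to 0,
\]
whose cokernel $\mathcal{Q}$ is supported on $M$. Since $C \cap M = \varnothing$, in a neighborhood of $M$ the two log-tangent sheaves reduce to $\mathcal{T}_{\widetilde{S}}(-\log M)$ and $\mathcal{T}_{\widetilde{S}}$, so that $\mathcal{Q} \cong \bigoplus_j N_{M_j/\widetilde{S}} = \bigoplus_j \mathcal{O}_{\mathbb{P}^1}(-2)$, one summand per $(-2)$-curve $M_j$ in $M$. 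In particular $H^2(\widetilde{S},\mathcal{Q}) = 0$, so the resulting long exact sequence reduces the claim to showing that $H^1(\widetilde{S},\mathcal{T}_{\widetilde{S}}(-\log C)) \to H^1(\widetilde{S},\mathcal{Q})$ is surjective.

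The next step is to localize at $M$. Contract $M$ via $\pi\colon\widetilde{S}\to S$, and let $W := \pi^{-1}(V)$ be the preimage of an affine open neighborhood $V \subset S$ of the singular points $q_i$ chosen disjoint from $C$. The Burns--Wahl vanishings $R^i\pi_*\mathcal{T}_{\widetilde{S}}(-\log M) = 0$ for $i \geq 1$ (valid for rational double points) together with Leray on the affine $V$ yield $H^i(W,\mathcal{T}_W(-\log M)) = 0$ for $i \geq 1$. Feeding this into the local version of the short exact sequence on $W$ produces an isomorphism
\[
H^1(W,\mathcal{T}_W) \xrightarrow{\;\sim\;} H^1(W,\mathcal{Q}|_W),
\]
where the left-hand side is naturally identified with $\bigoplus_i T^1_{q_i}$. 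Since $\mathcal{Q}$ is supported in $W$, one has $H^1(\widetilde{S},\mathcal{Q}) = H^1(W,\mathcal{Q}|_W)$; hence the required surjectivity is equivalent to surjectivity of the restriction map $H^1(\widetilde{S},\mathcal{T}_{\widetilde{S}}(-\log C)) \to H^1(W,\mathcal{T}_W)$.

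The main obstacle is establishing this last surjectivity. I would do it by Mayer--Vietoris on the cover $\widetilde{S} = W \cup (\widetilde{S}\setminus M)$: it suffices to show that every class in $H^1(W,\mathcal{T}_W)$ restricts trivially to $H^1(W\setminus M,\mathcal{T}_W)$, so that it can be glued with the zero class on $\widetilde{S}\setminus M$ (which automatically respects the log condition along $C$, since $C\subset\widetilde{S}\setminus W$). The vanishing of this restriction is the essential Burns--Wahl input: the isomorphism $H^1(W,\mathcal{T}_W) \cong H^1(W,\mathcal{Q}|_W)$, combined with the fact that $\mathcal{Q}|_W$ is supported on $M$, means every class in $H^1(W,\mathcal{T}_W)$ comes from local cohomology with supports in $M$, and therefore dies under restriction to the smooth punctured neighborhood $W\setminus M$ by the standard local cohomology long exact sequence. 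With this, the gluing produces a global class in $H^1(\widetilde{S},\mathcal{T}_{\widetilde{S}}(-\log C))$ mapping to the prescribed class in $H^1(W,\mathcal{T}_W)$, closing the argument.
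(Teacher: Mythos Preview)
Your architecture is sound and in the end rests on the same Burns--Wahl input as the paper, but the justification of the final step has a genuine gap. You claim that $H^1(W,\mathcal{T}_W)\to H^1(W\setminus M,\mathcal{T}_W)$ vanishes because $H^1(W,\mathcal{T}_W)\cong H^1(W,\mathcal{Q}|_W)$ and $\mathcal{Q}$ is supported on $M$. That inference is invalid: the isomorphism is induced by the \emph{forward} map $\mathcal{T}_W\to\mathcal{Q}$, and the obvious commutative square
\[
\begin{array}{ccc}
H^1(W,\mathcal{T}_W) & \xrightarrow{\ \sim\ } & H^1(W,\mathcal{Q})\\[2pt]
\big\downarrow & & \big\downarrow\\[2pt]
H^1(W\setminus M,\mathcal{T}_W) & \longrightarrow & 0
\end{array}
\]
gives no information about the left vertical arrow. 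What you actually need is surjectivity of $H^1_M(W,\mathcal{T}_W)\to H^1(W,\mathcal{T}_W)$, and this does \emph{not} follow from the Wahl vanishing $R^1\pi_\ast\mathcal{T}_{\widetilde S}(-\log M)=0$ alone (indeed $H^2_M(\mathcal{T}_W(-\log M))\cong H^1(W\setminus M,\mathcal{T}_W)$ is typically nonzero, so one cannot lift from $H^1_M(\mathcal{Q})$ to $H^1_M(\mathcal{T}_W)$ via the local-cohomology long exact sequence). The missing ingredient is the genuine Burns--Wahl theorem: for rational double points the composition $H^1_M(\mathcal{T}_W)\to H^1(W,\mathcal{T}_W)\to\bigoplus_i H^1(N_{M_i/\widetilde S})$ is an isomorphism. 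Combined with your isomorphism $H^1(W,\mathcal{T}_W)\xrightarrow{\sim}\bigoplus_i H^1(N_{M_i/\widetilde S})$, this forces $H^1_M(W,\mathcal{T}_W)\xrightarrow{\sim}H^1(W,\mathcal{T}_W)$, and then the local-cohomology sequence gives the vanishing you want. With this correction your Mayer--Vietoris gluing goes through.

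For comparison, the paper reaches the same surjectivity $H^1(\mathcal{T}_{\widetilde S}(-\log C))\to\bigoplus_i H^1(N_{M_i/\widetilde S})$ by a direct global diagram chase rather than by localizing. It invokes the Burns--Wahl splitting $H^1(\mathcal{T}_{\widetilde S})=H^1(\mathcal{T}_{\widetilde S}(-\log M))\oplus H^1_M(\mathcal{T}_{\widetilde S})$ and observes that the $H^1_M$-component of any class, being supported on $M$ with $C\cap M=\varnothing$, dies under the map $H^1(\mathcal{T}_{\widetilde S})\to\bigoplus_i H^1(N_{C_i/\widetilde S})$ and therefore lifts to $H^1(\mathcal{T}_{\widetilde S}(-\log C))$. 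This is the same idea as yours---the local contribution at $M$ is insensitive to the log condition along $C$---packaged without the detour through an affine neighborhood and Mayer--Vietoris.
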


\begin{proof}
Let $M=\sum_i M_i$ and $C=\sum_i C_i$ be the prime decompositions
of $M$ and $C$. We have three short exact sequences:

\begin{align*}\allowdisplaybreaks
&0 \to \sheaf{T_{\widetilde{S}}}(-\log{M}) \to \sheaf{T_{\widetilde{S}}} \to \oplus \sheaf{N_{M_i/{\widetilde{S}}}} \to 0,\\
&0 \to \sheaf{T_{\widetilde{S}}}(-\log(C+M)) \to \sheaf{T_{\widetilde{S}}}(-\log{M}) \to \oplus \sheaf{N_{C_i/{\widetilde{S}}}} \to 0,\\
&0 \to \sheaf{T_{\widetilde{S}}}(-\log(C+M)) \to
\sheaf{T_{\widetilde{S}}}(-\log{C}) \to \oplus
\sheaf{N_{M_i/{\widetilde{S}}}} \to 0.
\end{align*}
We then have the following commutative diagram of cohomologies:
\begin{equation*}%\label{equation:diagram-remove-RDP}
\small{ \xymatrix{
  & 0 \ar[d] & 0 \ar[d] &  &\\
0 \ar[r] & H^1(\sheaf{T_{\widetilde{S}}}(-\log(C+M))) \ar[r] \ar[d] & H^1(\sheaf{T_{\widetilde{S}}}(-\log{C}))
\ar[r]^{\phi} \ar[d]^{\xi} & \oplus H^1(\sheaf{N_{M_i/{\widetilde{S}}}}) \ar@{=}[d]& \\
0 \ar[r] & H^1(\sheaf{T_{\widetilde{S}}}(-\log{M})) \ar[r] \ar[d] & H^1(\sheaf{T_{\widetilde{S}}}) \ar[r]^{\psi}
\ar[d]^{\zeta}& \oplus H^1(\sheaf{N_{M_i/{\widetilde{S}}}}) \ar[r] & 0\\
 & \oplus H^1(\sheaf{N_{C_i/{\widetilde{S}}}}) \ar@{=}[r] & \oplus H^1(\sheaf{N_{C_i/{\widetilde{S}}}})& &
}}
\end{equation*}

Here all horizontal and vertical sequences are exact. Especially
the second row is a short exact sequence, which we explain now briefly: It is shown
in Burns-Wahl~\cite[pp.\thinspace 70--72]{Burns-Wahl} (see also Wahl~\cite[\S6]{Wahl75})
that the composition
\begin{equation*}
H^1_M(\sheaf{T_{\widetilde{S}}}) \to H^1(
\sheaf{T_{\widetilde{S}}}) \to \oplus H^1(
\sheaf{N_{M_i/\widetilde{S}}})
\end{equation*}
is an isomorphism because the $q_i$'s are \emph{rational
double points}; hence, one has a direct sum decomposition
\begin{equation}\label{equation:H^1(T_Z)=H^1(T_Z(-log(M)))}
H^1(\sheaf{T_{{\widetilde{S}}}}) =
H^1(\sheaf{T_{\widetilde{S}}}(-\log{M})) \oplus H^1_M(
\sheaf{T_{\widetilde{S}}})
\end{equation}
and an isomorphism $H^2(\sheaf{T_{\widetilde{S}}}) \cong
H^2(\sheaf{T_{\widetilde{S}}}(-\log{M}))$. Therefore the second row is exact.

In order to prove the assertion, it is enough to show that
\[\phi: H^1(\sheaf{T_{\widetilde{S}}}(-\log{C}))\to \oplus H^1(\sheaf{N_{M_i/{\widetilde{S}}}})\]
is surjective. Let $\alpha \in \oplus
H^1(\sheaf{N_{M_i/{\widetilde{S}}}})$. Since $\psi$ is surjective,
we have $\psi(\beta)=\alpha$ for some $\beta \in
H^1(\sheaf{T_{\widetilde{S}}})$. By
\eqref{equation:H^1(T_Z)=H^1(T_Z(-log(M)))} we have
\[\beta = \gamma + \alpha'\]
for some $\gamma \in H^1(\sheaf{T_{\widetilde{S}}}(-\log{M}))$ and
$\alpha' \in H^1_M(\sheaf{T_{\widetilde{S}}})$ such that $\alpha'$
is mapped to $\alpha$ under the composition
$H^1_M(\sheaf{T_{\widetilde{S}}}) \to
H^1(\sheaf{T_{\widetilde{S}}}) \to \oplus
H^1(\sheaf{N_{M_i/{\widetilde{S}}}})$. Since $\alpha'$ is
supported on $M$ and $C \cap M = \varnothing$, its image
$\zeta(\alpha')$ under $\zeta: H^1(\sheaf{T_{\widetilde{S}}}) \to
\oplus H^1(\sheaf{N_{C_i/{\widetilde{S}}}})$ vanishes. Therefore
$\zeta(\beta-\gamma)=\zeta(\alpha')=0$, and so
\[\beta-\gamma=\xi(\delta)\]
for some $\delta \in H^1(\sheaf{T_{\widetilde{S}}}(-\log{C}))$;
hence, $\phi(\delta)=\psi(\xi(\delta))=\alpha$, which shows that
$\phi$ is surjective.
\end{proof}

\begin{proposition}[cf.~{Esnault-Viehweg~\cite[2.3]{Esnault-Viehweg}}]\label{proposition:E-V-sequence}
Let $E = \sum_{i=1}^{n}{C_i}$ be a simple normal crossing divisor on a smooth surface $T$. Then one has the following exact sequences:

    \begin{enumerate}[(a)]
    \item $\displaystyle 0 \to \Omega_T^1 \to \Omega_T^1 (\log{E}) \to \bigoplus_{i=1}^{n} \sheaf{O_{C_i}} \to 0$.

    \item $\displaystyle 0 \to \Omega_T^1(\log{E}) \to \Omega_T^1(\log(E-C_1))(C_1) \to \Omega_{C_1}^1(E|_{C_1}) \to 0$.
    \end{enumerate}
\end{proposition}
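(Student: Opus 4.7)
The plan is to establish both exact sequences by direct local computation on SNC coordinate charts, using that $T$ is a surface and so the only local models for $E$ near a point are the empty divisor, a single smooth branch $\{z_1=0\}$, or a node $\{z_1 z_2 = 0\}$. Both maps will be defined intrinsically (as residues), so gluing from the local charts is automatic.

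For sequence (a), I would take the second arrow to be the \emph{Poincar\'e residue} along the components of $E$. On each SNC local model, $\Omega_T^1(\log E)$ is a free $\sheaf{O_T}$-module generated by $dz_j/z_j$ (for each component $C_j = \{z_j = 0\}$ meeting the chart) together with the remaining $dz_k$. The residue sends $dz_j/z_j \mapsto 1 \in \sheaf{O_{C_j}}$ and kills the $dz_k$. This is manifestly surjective in each chart, and its kernel is generated by $z_j \cdot (dz_j/z_j) = dz_j$ together with the $dz_k$, which spans $\Omega_T^1$. Gluing is immediate.

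For sequence (b), the first arrow is the tautological inclusion, since a log pole along $C_1$ is in particular a pole of order at most $1$. For the second arrow I would again localize: after shrinking, assume $C_1 = \{z_1 = 0\}$ and the components of $E - C_1$ meeting the chart amount to at most one $C_2 = \{z_2 = 0\}$ (by SNC). Then the quotient $\Omega_T^1(\log(E-C_1))(C_1)/\Omega_T^1(\log E)$ is generated over $\sheaf{O_T}$ by $z_1^{-1} dz_2$ (resp.\ $z_1^{-1} z_2^{-1} dz_2$ at a crossing), and is annihilated by $z_1$, so it descends to a sheaf on $C_1$. Restricting, $z_1^{-1}$ becomes a local trivialization of the normal bundle $N_{C_1/T} = \sheaf{O_T}(C_1)|_{C_1}$; in the crossing case $z_2^{-1}$ restricts to the local generator of $\sheaf{O_{C_1}}((E-C_1)|_{C_1})$ at $C_1 \cap C_2$; and $dz_2$ restricts to a local generator of $\Omega_{C_1}^1$. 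Combining these identifies the quotient with $\Omega_{C_1}^1\bigl(C_1|_{C_1} + (E-C_1)|_{C_1}\bigr) = \Omega_{C_1}^1(E|_{C_1})$.

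The main subtlety is the bookkeeping in (b): one must track how the twist by $\sheaf{O_T}(C_1)$ produces the normal bundle $N_{C_1/T}$ upon restriction to $C_1$, and how the log poles of $E - C_1$ on $T$ become honest poles of $\Omega_{C_1}^1$ at the intersection points $(E-C_1) \cap C_1$, with the two contributions combining to yield exactly $E|_{C_1}$ on $C_1$. Once the three local SNC models are handled carefully, both sequences glue globally because each of the maps was defined intrinsically in a chart-independent manner.
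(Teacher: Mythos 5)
The paper gives no proof of this proposition at all, simply citing Esnault--Viehweg [2.3]; your local-coordinate computation is exactly the standard argument from that reference, and it is correct: (a) is the Poincar\'e residue sequence, and (b) is the second residue sequence twisted by $\sheaf{O_T}(C_1)$, using that on the curve $C_1$ a logarithmic pole at a reduced point of $(E-C_1)|_{C_1}$ is just an ordinary simple pole, so that $\Omega_{C_1}^1\bigl(\log (E-C_1)|_{C_1}\bigr)\bigl(C_1|_{C_1}\bigr)=\Omega_{C_1}^1(E|_{C_1})$. The only point worth spelling out is that your local identification of the cokernel in (b) with $\Omega_{C_1}^1(E|_{C_1})$ is independent of the chosen SNC coordinates (equivalently, that it is induced by the intrinsic restriction-of-forms map), but this verification is routine and does not affect the correctness of the argument.
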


\begin{proof}[Proof of Theorem~\ref{theorem:H2(TY)=0}]
We first claim that
\[H^2(W, \sheaf{T_W}(-\log(F_1+F_2)))=0.\]
By duality, we have to show that
\begin{equation*}
h^2(W, \sheaf{T_W}(-\log(F_1+F_2)))=h^0(W, \Omega_W^1(\log(F_1+F_2))(K_W))=0.
\end{equation*}
Since $K_W \lineqv -F_2+E_1-E_2$, it follows by Proposition~\ref{proposition:E-V-sequence} that
\begin{equation*}\allowdisplaybreaks
\begin{split}
H^0(W, \Omega_W^1(\log(F_1+F_2))(K_W)) &= H^0(W, \Omega_W^1(\log{F_1})(K_W+F_2))\\
&= H^0(W, \Omega_W^1(\log{F_1})(E_1-E_2))\\
&\subset H^0(W, \Omega_W^1(\log{F_1})(E_1)) \\
&=H^0(W, \Omega_W^1(\log(F_1+E_1))).
\end{split}
\end{equation*}
Hence it suffices to show that $H^0(W,
\Omega_W^1(\log(F_1+E_1)))=0$. On the other hand, we obtain a long
exact sequence from Proposition~\ref{proposition:E-V-sequence}:
\[H^0(W, \Omega_W) \to H^0(W, \Omega_W^1(\log(F_1+E_1))) \to H^0(F_1, \sheaf{O_{F_1}}) \oplus H^0(E_1, \sheaf{O_{E_1}}) \xrightarrow{\delta} H^1(W, \Omega_{W})\]
Since $H^0(W, \Omega_W)=0$, it is enough to show that the
connecting homomorphism $\delta$ is injective. Note the map
$\delta$ is the first Chern class map. But $F_1$ and $E_1$ are
linearly independent in the Picard group of $W$; hence, the map
$\delta$ is injective. Therefore the claim follows.

Let $B'=B_4+B_8+B_3+B_1+B_6+B_2$. By Theorem~\ref{theorem:Burns-Wahl} we have
\[h^2(W,\sheaf{T_W}(-\log{(F_1+F_2)})) = h^2(W,\sheaf{T_W}(-\log{(F_1+F_2+B'+M+\ell)})).\]
We use Propositions~\ref{add-delete} and
\ref{proposition:Flenner-Zaidenberg} to obtain
\begin{multline*}
h^2(W,\sheaf{T_W}(-\log{(F_1+F_2+B'+M+\ell)}))=\\
h^2(Z,\sheaf{T_Z}(-\log{(F_1+F_2+B'+M+\ell+S+E')})),
\end{multline*}
where $E'=E_1+E_2+E_6$. In this way, it follows by the above claim that
\[h^2(Z,\sheaf{T_Z}(-\log{(F_1+F_2+B'+M+l+S+E')}))=h^2(W,\sheaf{T_W}(-\log{(F_1+F_2)}))=0.\]
Then, by Proposition~\ref{proposition:Lee-Park-Log}, we have $H^2(Y,
\sheaf{T_{Y'}})=0$. Notice we can modify $B'$ to obtain vanishing
for $H^2(\widetilde{Y}, \sheaf{T_{\widetilde{Y}}})$ and $H^2(Y,
\sheaf{T_Y})$ as well.
\end{proof}

We now prove that $H^2(\widetilde{X},
\sheaf{T_{\widetilde{X}}})=H^2(X, \sheaf{T_X})=H^2(X',
\sheaf{T_{X'}})=0$.

\begin{proof}[Proof of Theorem~\ref{theorem:H2(TX)=0}]
We will use our explicit model of $X'$ in
Proposition~\ref{proposition:CampExplicit}. The proof goes along
the same lines as the proof of the above
Theorem~\ref{theorem:H2(TY)=0}. We may only need to mention that
we start with the elliptic fibration $E(1)'$, and $2$ $I_2$ fibers
(instead of $2$ $I_1$'s).
\end{proof}

%------------------------------------------------------------------------------------------------------
\section{The invariant part of the deformation space} \label{section:InvDeformation-space}

The involution of a general fiber $X_t$ induced by the double
covering $\phi_t: X_t \to Y_t$ extends to a
$\mathbb{Z}/2\mathbb{Z}$-action on the deformation space of $X_t$.
We will count the dimension of the subspace of $H^1(X_t,
\sheaf{T_{X_t}})$ which is fixed by the
$\mathbb{Z}/2\mathbb{Z}$-action; Theorem~\ref{theorem:invariant-part}.

Let $\alpha_t: \widetilde{Y}_t \to Y_t$ be the minimal resolution and
let $\beta_t: \widetilde{X}_t \to X_t$ be the blowing-up at the
four ramification points. We then have the following commutative
diagram where the vertical morphisms are double covers:
\begin{equation*}
\xymatrix{
\widetilde{X}_t \ar[r]^{\beta_t} \ar[d]_{\widetilde{\phi}_t} & X_t \ar[d]^{\phi_t} \\
\widetilde{Y}_t \ar[r]^{\alpha_t} & Y_t
}
\end{equation*}
Recall that the branch divisor $B_t$ of the double covering
$\widetilde{\phi}_t$ consists of four disjoint $(-2)$-curves
$B_{1,t}, \dotsc, B_{4,t}$ and the corresponding ramification
divisor $R_t$ consists of four disjoint $(-1)$-curves
$R_{1,t}, \dotsc, R_{4,t}$. As before the involution of
$\widetilde{X}_t$ induced by the double covering
$\widetilde{\phi}_t: \widetilde{X}_t \to \widetilde{Y}_t$ extends
to a $\mathbb{Z}/2\mathbb{Z}$-action on the deformation space of
$\widetilde{X}_t$.

\begin{lemma}\label{lemma:invariant}
$\dim H^1(\widetilde{X}_t, \sheaf{T_{\widetilde{X}_t}})^{\mathbb{Z}/2\mathbb{Z}}=4$.
\end{lemma}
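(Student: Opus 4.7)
The plan is to identify $H^1(\widetilde{X}_t, \sheaf{T_{\widetilde{X}_t}})^{\mathbb{Z}/2\mathbb{Z}}$ with $H^1(\widetilde{Y}_t, \sheaf{T_{\widetilde{Y}_t}}(-\log B_t))$ and then compute this log-tangent cohomology via the standard branch-divisor sequence together with the tools built in Sections~\ref{section:Godeaux} and~\ref{section:obstruction}.

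Step 1: identification of the invariant pushforward. Write $\pi := \widetilde{\phi}_t$, with $B_t=\sum_{i=1}^{4} B_{i,t}$, ramification $R_t$, and $B_t\sim 2L_t$. Since $\pi$ is finite, $H^1(\widetilde{X}_t,\sheaf{T_{\widetilde{X}_t}})^{\mathbb{Z}/2\mathbb{Z}} = H^1(\widetilde{Y}_t,(\pi_{*}\sheaf{T_{\widetilde{X}_t}})^{+})$. Dualizing the standard equality $\pi^{*}\Omega^{1}_{\widetilde{Y}_t}(\log B_t)=\Omega^{1}_{\widetilde{X}_t}(\log R_t)$ gives $\sheaf{T_{\widetilde{X}_t}}(-\log R_t)=\pi^{*}\sheaf{T_{\widetilde{Y}_t}}(-\log B_t)$, and applying $\pi_{*}$ produces the eigen-decomposition
\[\pi_{*}\sheaf{T_{\widetilde{X}_t}}(-\log R_t) \;=\; \sheaf{T_{\widetilde{Y}_t}}(-\log B_t) \;\oplus\; \sheaf{T_{\widetilde{Y}_t}}(-\log B_t)(-L_t),\]
with invariant summand $\sheaf{T_{\widetilde{Y}_t}}(-\log B_t)$. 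Since $\sheaf{N_{R_t/\widetilde{X}_t}}$ is anti-invariant (in the local model $z^{2}=y$ the involution $z\mapsto -z$ acts by $-1$ on the normal direction to $R_t$), the exact sequence $0\to\sheaf{T_{\widetilde{X}_t}}(-\log R_t)\to\sheaf{T_{\widetilde{X}_t}}\to\sheaf{N_{R_t/\widetilde{X}_t}}\to 0$ has matching $+$-parts, giving $(\pi_{*}\sheaf{T_{\widetilde{X}_t}})^{+}\cong\sheaf{T_{\widetilde{Y}_t}}(-\log B_t)$.

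Step 2: the log-tangent computation on $\widetilde{Y}_t$. First, $h^{2}(\widetilde{Y}_t,\sheaf{T_{\widetilde{Y}_t}})=0$ by transporting the vanishing $h^{2}(\widetilde{Y},\sheaf{T_{\widetilde{Y}}})=0$ of Theorem~\ref{theorem:H2(TY)=0} via semicontinuity and flatness of the relative tangent sheaf on the $\mathbb{Q}$-Gorenstein smoothing $\widetilde{\mathcal{Y}}\to\Delta$, exactly as in the proof of Proposition~\ref{propsotion:stable-godeaux}. Because $\widetilde{Y}_t$ is minimal of general type with $p_g=q=0$, $K^{2}=1$, we have $h^{0}(\sheaf{T_{\widetilde{Y}_t}})=0$ and Hirzebruch--Riemann--Roch gives $h^{1}(\sheaf{T_{\widetilde{Y}_t}})=10\chi-2K^{2}=8$. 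Next, Theorem~\ref{theorem:Burns-Wahl} applied with $C=\varnothing$ and $M=B_t$ (four disjoint $A_1$-configurations in $\widetilde{Y}_t$) gives $h^{2}(\sheaf{T_{\widetilde{Y}_t}}(-\log B_t))=h^{2}(\sheaf{T_{\widetilde{Y}_t}})=0$. Taking cohomology of
\[0 \to \sheaf{T_{\widetilde{Y}_t}}(-\log B_t) \to \sheaf{T_{\widetilde{Y}_t}} \to \bigoplus_{i=1}^{4}\sheaf{N_{B_{i,t}/\widetilde{Y}_t}} \to 0,\]
with each $\sheaf{N_{B_{i,t}/\widetilde{Y}_t}}\cong\sheaf{O_{\mathbb{P}^1}}(-2)$ (so $h^{0}=0$ and $h^{1}=1$), the long exact sequence collapses to $0\to H^{1}(\sheaf{T_{\widetilde{Y}_t}}(-\log B_t))\to\mathbb{C}^{8}\to\mathbb{C}^{4}\to 0$, yielding $h^{1}=4$.

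The main obstacle is Step 1: cleanly identifying $(\pi_{*}\sheaf{T_{\widetilde{X}_t}})^{+}$ with the log tangent sheaf along the branch divisor. Once this is in hand, each remaining input---the $H^{2}$-vanishing on the singular base, its propagation to the smooth fiber via the relative tangent sheaf, the Burns--Wahl style swap of a log divisor along ADE configurations, and the Euler sequence for the branch curves---has already been set up in Sections~\ref{section:Godeaux} and~\ref{section:obstruction}.
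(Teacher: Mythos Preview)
Your proof is correct and structurally parallel to the paper's, but the key vanishing in Step~2 is obtained by a genuinely different route. Both arguments first identify $H^1(\widetilde{X}_t,\sheaf{T_{\widetilde{X}_t}})^{\mathbb{Z}/2\mathbb{Z}}$ with $H^1(\widetilde{Y}_t,\sheaf{T_{\widetilde{Y}_t}}(-\log B_t))$; the paper simply cites Pardini's lemma, while you re-derive it. The divergence is in computing this $h^1$. The paper Serre-dualizes to $H^1(\Omega^1_{\widetilde{Y}_t}(\log B_t)(K_{\widetilde{Y}_t}))$, uses the residue sequence of Proposition~\ref{proposition:E-V-sequence}(a) twisted by $K$, and obtains the needed vanishing $H^0(\Omega^1_{\widetilde{Y}_t}(\log B_t)(K_{\widetilde{Y}_t}))=0$ as a summand of $H^2(\sheaf{T_{\widetilde{X}_t}})=0$, which in turn comes from the \emph{Campedelli} side via Proposition~\ref{proposition:stable-campedelli}. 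You instead stay on the tangent side and obtain the Serre-dual vanishing $h^2(\sheaf{T_{\widetilde{Y}_t}}(-\log B_t))=0$ by applying Theorem~\ref{theorem:Burns-Wahl} (with $C=\varnothing$, $M=B_t$) together with $h^2(\sheaf{T_{\widetilde{Y}_t}})=0$ from semicontinuity on the \emph{Godeaux} side. Your route is slightly more self-contained---it never invokes the Campedelli construction---and gives a nice internal application of Theorem~\ref{theorem:Burns-Wahl}; the paper's route is shorter once Proposition~\ref{proposition:stable-campedelli} is available. Either way the final count $8-4=4$ is the same, coming from Serre-dual short exact sequences.
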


\begin{proof}
By Pardini~\cite[Lemma~4.2]{Pardini}, the invariant part of
$(\widetilde{\phi}_t)_{\ast}{\sheaf{T_{\widetilde{X}_t}}}$ under
the $\mathbb{Z}/2\mathbb{Z}$-action is
$\sheaf{T_{\widetilde{Y}_t}}(-\log{B_t})$. Therefore we have
\begin{equation*}
H^1(\widetilde{X}_t,
\sheaf{T_{\widetilde{X}_t}})^{\mathbb{Z}/2\mathbb{Z}} \cong
H^1(\widetilde{Y}_t, \sheaf{T_{\widetilde{Y}_t}}(-\log{B_t}))
\cong H^1(\widetilde{Y}_t,
\Omega_{\widetilde{Y}_t}^1(\log{B_t})(K_{\widetilde{Y}_t})).
\end{equation*}
We know that ${\widetilde{\phi_t}}_*
(\Omega_{\widetilde{X}_t}^1(K_{\widetilde{X}_t})) =
\Omega_{\widetilde{Y}_t}^1(\log{B_t})(K_{\widetilde{Y}_t}) \oplus
\Omega_{\widetilde{Y}_t}^1(K_{\widetilde{Y}_t}+L_t)$, where $2 L_t
\sim B_t$ defines the double cover $\widetilde{\phi_t}$. Therefore
$ H^2(\sheaf{T_{\widetilde{X}_t}})
=H^0(\Omega_{\widetilde{X}_t}^1(K_{\widetilde{X}_t}))
=H^0({\widetilde{\phi_t}}_*(\Omega_{\widetilde{X}_t}^1(K_{\widetilde{X}_t})))=0$
by Theorem~\ref{proposition:stable-campedelli}. Then we have $
H^0(\Omega_{\widetilde{Y}_t}^1(\log{B_t})(K_{\widetilde{Y}_t}))=0$.
Hence, by Proposition~\ref{proposition:E-V-sequence}, there is a
short exact sequence
\begin{equation*}
0 \to H^0(B_t, \sheaf{O_{B_t}}(K_{\widetilde{Y}_t})) \to
H^1(\widetilde{Y}_t,
\Omega_{\widetilde{Y}_t}^1(K_{\widetilde{Y}_t})) \to
H^1(\widetilde{Y}_t,
\Omega_{\widetilde{Y}_t}^1(\log{B_t})(K_{\widetilde{Y}_t})) \to
0.
\end{equation*}
Since $B_t$ consists of four disjoint $(-2)$-curves, we have
$h^0(B_t, \sheaf{O_{B_t}}(K_{\widetilde{Y}_t}))=4$. By
Proposition~\ref{propsotion:stable-godeaux}, we know that
$h^1(\widetilde{Y}_t,
\Omega_{\widetilde{Y}_t}^1(K_{\widetilde{Y}_t}))=8$. Therefore $
h^1(\widetilde{Y}_t,
\Omega_{\widetilde{Y}_t}^1(\log{B_t})(K_{\widetilde{Y}_t}))=4$.

\end{proof}

\begin{theorem}\label{theorem:invariant-part}
The subspace of the deformation space of $X_t$ invariant under the
$\mathbb{Z}/2\mathbb{Z}$-action is four dimensional.
\end{theorem}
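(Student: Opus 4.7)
The plan is to compare the invariant part of $H^1(X_t,\sheaf{T_{X_t}})$ with the invariant part computed in Lemma~\ref{lemma:invariant} via the equivariant blow-down $\beta_t\colon\widetilde{X}_t\to X_t$ at the four fixed points $p_1,\ldots,p_4$ of $\sigma_t$. The starting point is that $\beta_t$ is itself $\mathbb{Z}/2\mathbb{Z}$-equivariant because it is the blow-up of the fixed scheme of $\sigma_t$, so all the sheaves and maps below carry a natural $\mathbb{Z}/2\mathbb{Z}$-structure.

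First I would establish, from a local computation at each $p_i$ together with the vanishing $R^1\beta_{t*}\sheaf{T_{\widetilde{X}_t}}=0$ (which follows from the splitting $\sheaf{T_{\widetilde{X}_t}}|_{E_i}\cong \sheaf{O}_{\mathbb{P}^1}(2)\oplus\sheaf{O}_{\mathbb{P}^1}(-1)$ and the theorem on formal functions), the short exact sequence on $X_t$
$$0\to \beta_{t*}\sheaf{T_{\widetilde{X}_t}}\to \sheaf{T_{X_t}}\to \bigoplus_{i=1}^{4} T_{p_i}X_t\to 0,$$
with $\beta_{t*}\sheaf{T_{\widetilde{X}_t}}$ identified with the sheaf of vector fields on $X_t$ vanishing at the $p_i$. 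Taking cohomology, using $H^0(X_t,\sheaf{T_{X_t}})=0$ (surface of general type) and Leray for $\beta_t$, this collapses to an equivariant short exact sequence
$$0\to \bigoplus_{i=1}^{4} T_{p_i}X_t\to H^1(\widetilde{X}_t,\sheaf{T_{\widetilde{X}_t}})\to H^1(X_t,\sheaf{T_{X_t}})\to 0.$$

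The key observation is then the action on the middle term. At an isolated fixed point of a holomorphic involution on a smooth complex surface, Cartan linearization forces $d\sigma_t|_{p_i}=-\mathrm{id}_{T_{p_i}X_t}$, because any $+1$ eigenvalue would produce a positive-dimensional fixed locus through $p_i$. Hence $\bigl(\bigoplus_{i=1}^4 T_{p_i}X_t\bigr)^{\mathbb{Z}/2\mathbb{Z}}=0$. Since taking $\mathbb{Z}/2\mathbb{Z}$-invariants is exact in characteristic zero, passing to invariants in the displayed sequence yields an isomorphism
$$H^1(\widetilde{X}_t,\sheaf{T_{\widetilde{X}_t}})^{\mathbb{Z}/2\mathbb{Z}}\;\xrightarrow{\sim}\; H^1(X_t,\sheaf{T_{X_t}})^{\mathbb{Z}/2\mathbb{Z}},$$
and Lemma~\ref{lemma:invariant} gives the claimed dimension $4$.

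The main obstacle is mostly bookkeeping around equivariance: one must verify that the pushforward-then-evaluate sequence is genuinely $\mathbb{Z}/2\mathbb{Z}$-compatible and that the induced action on each $T_{p_i}X_t$ coincides with $d\sigma_t|_{p_i}$. Both reduce to the fact that $\{p_1,\ldots,p_4\}$ is precisely the $\sigma_t$-fixed scheme and that the entire blow-up construction lifts to a $\sigma_t$-equivariant construction on $\widetilde{X}_t$; neither is deep, but both are essential for the exactness-of-invariants argument to transport Lemma~\ref{lemma:invariant} from $\widetilde{X}_t$ down to $X_t$.
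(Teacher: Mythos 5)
Your proof is correct and takes essentially the same route as the paper: both reduce to the equivariant exact sequence $0 \to \bigoplus_i T_{p_i}X_t \to H^1(\widetilde{X}_t,\sheaf{T_{\widetilde{X}_t}}) \to H^1(X_t,\sheaf{T_{X_t}}) \to 0$, observe that the invariant part of the tangent spaces at the four fixed points vanishes because $d\sigma_t=-\mathrm{id}$ there, and then invoke Lemma~\ref{lemma:invariant}. The only differences are in packaging: the paper identifies $H^1(\widetilde{X}_t,\sheaf{T_{\widetilde{X}_t}})\cong H^1(X_t,\sheaf{T_{X_t}}\otimes\sheaf{I})$ through $\sheaf{T_{\widetilde{X}_t}}(-\log R)$ and Catanese's Lemma~9.22 rather than through $\beta_{t*}$ and Leray, and it cites Werner for the pointwise vanishing where you give the linearization argument directly.
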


\begin{proof}
We apply a similar strategy in Werner~\cite[\S4]{Werner}. We have the exact sequence
\begin{equation*}
0 \to \sheaf{T_{\widetilde{X}_t}}(-\log{R}) \to \sheaf{T_{\widetilde{X}_t}} \to \oplus \sheaf{N_{R_{i,t}, \widetilde{X}}} \to 0.
\end{equation*}
Since each $R_{i,t}$ is a $(-1)$-curve, we have $
H^1(\widetilde{X}_t, \sheaf{T_{\widetilde{X}_t}}) =
H^1(\widetilde{X}_t, \sheaf{T_{\widetilde{X}_t}}(-\log{R}))$. On
the other hand, it follows by
Catanese~\cite[Lemma~9.22]{Catanese-book} that
\begin{equation}\label{equation:Catanese}
H^1(\widetilde{X}_t, \sheaf{T_{\widetilde{X}_t}}(-\log{R})) = H^1(X_t, \sheaf{T_{X_t}} \otimes \sheaf{I})
\end{equation}
where $\sheaf{I}$ is the ideal sheaf of the four points in $X_t$
obtained by contacting the exceptional divisors $R_{1,t}, \dotsc,
R_{4,t}$.

Let $P$ be the set of these four points. From the ideal sequence,
we have
\begin{equation*}
0 \to H^0(P, \sheaf{T_{X_t}} \otimes \sheaf{O_P}) \to H^1(X_t,
\sheaf{T_{X_t}} \otimes \sheaf{I}) \to H^1(X_t, \sheaf{T_{X_t}})
\to 0.
\end{equation*}
Therefore the invariant parts of each space satisfies:
\begin{equation*}
0 \to H^0(P, \sheaf{T_{X_t}} \otimes
\sheaf{O_P})^{\mathbb{Z}/2\mathbb{Z}} \to H^1(X_t, \sheaf{T_{X_t}}
\otimes \sheaf{I})^{\mathbb{Z}/2\mathbb{Z}} \to H^1(X_t,
\sheaf{T_{X_t}})^{\mathbb{Z}/2\mathbb{Z}} \to 0.
\end{equation*}
According to Werner~\cite[p.\thinspace1523]{Werner}, we have $H^0(P,
\sheaf{T_{X_t}} \otimes \sheaf{O_P})^{\mathbb{Z}/2\mathbb{Z}}=0$.
Therefore it follows by \eqref{equation:Catanese} and
Lemma~\ref{lemma:invariant} that
$$\dim H^1(X_t, \sheaf{T_{X_t}})^{\mathbb{Z}/2\mathbb{Z}} = \dim H^1(X_t, \sheaf{T_{X_t}} \otimes \sheaf{I})^{\mathbb{Z}/2\mathbb{Z}}
=\dim
H^1(\widetilde{X}_t,\sheaf{T_{\widetilde{X}_t}}(-\log{R}))^{\mathbb{Z}/2\mathbb{Z}}=4.$$

\end{proof}

%------------------------------------------------------------------------------

\section{Another example}\label{section:another-example}

We briefly describe another rational surface $Z$ which makes it
possible to construct simply connected numerical Campedelli
surfaces with an involution as before. The associated Godeaux
surfaces come from a rational surface $Y'$ with $K_{Y'}$ ample having three $A_1$-singularities, one $A_3$-singularity, and only one
singularity of class $T$.

\subsection{A rational surface $Z=E(1) \sharp 8 \overline{\mathbb{CP}^2}$}

The elliptic fibration $E(1)$ is the one in Section~\ref{section:Godeaux}. In the construction of $Z$, we will use the
sections $e_4$, $e_7$, $e_8$ among the four sections of $E(1)$. We
denote the sections $e_4$, $e_7$, $e_8$ by $S_1$, $S_2$, $S_3$,
respectively. We first blow up at the two nodes of the nodal
singular fibers $F_1$ and $F_2$ so that we obtain a blown-up
rational elliptic surface $W=E(1) \sharp
2\overline{\mathbb{CP}^2}$; Figure~\ref{figure:W-2}. Let $E_1$ and
$E_2$ be the exceptional curves over the nodes of $F_1$ and $F_2$,
respectively. We further blow up at each two marked points
$\bullet$ and blow up four times at the marked point $\bigodot$ in
Figure~\ref{figure:W-2}. We then get a rational surface $Z=E(1)
\sharp 8 \overline{\mathbb{CP}^2}$; Figure~\ref{figure:Z-2}. There
exists one linear chain of $\mathbb{CP}^1$s in $Z$ whose dual
graph is
\begin{equation*}
\udc{-6}{F_1}-\udc{-1}{S_3}-\udc{-8}{F_2}-\udc{-2}{S_1}-\udc{-2}{\ell}-\udc{-3}{S_2}-\udc{-2}{E_5}-\udc{-2}{E_6}-\udc{-2}{E_7}.
\end{equation*}
Notice that the $(-1)$-curve $S_3$ is contracted in the way down,
which fixes the configuration so that we obtain one singular point of class $T$ whose resolution graph is given by
\begin{equation*}
C_{24,5}:=\uc{-5}-\uc{-7} -\uc{-2}-\uc{-2}-\uc{-3}-\uc{-2}-\uc{-2}-\uc{-2}.
\end{equation*}

The divisor $B_1+B_2+B_3+B_4$ is the $2$-divisible one as before.
The $C_{24,5}$ and the $(-2)$-curves $B_3$, $B_4$, $B_1+B_6+B_2$,
and $M$ are contracted to obtain a singular surface $Y'$. One can
use again Theorem \ref{theorem:Burns-Wahl} to show that the space
$\Defq(Y')$ is smooth (of dimension $8$).

\begin{figure}[tbh]
\centering
\includegraphics[scale=1]{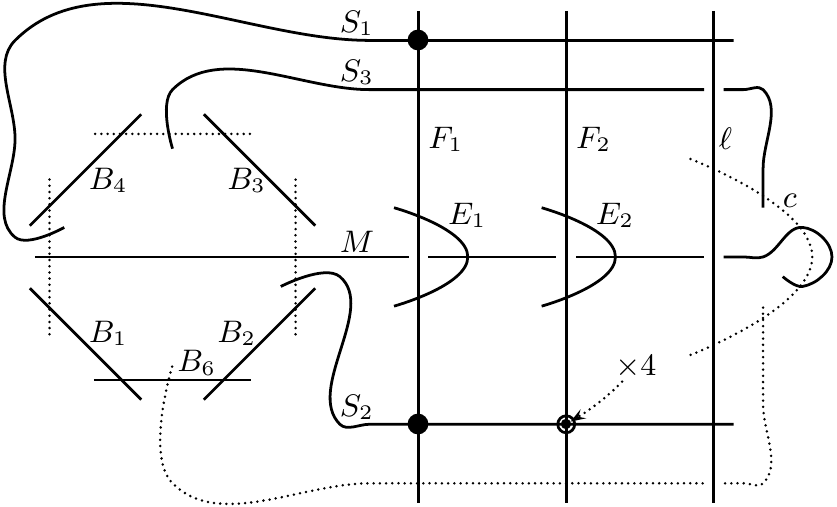}
\caption{A blown-up rational elliptic surface $W=E(1) \sharp 2\overline{\mathbb{CP}^2}$}
\label{figure:W-2}
\end{figure}

\begin{figure}[tbh]
\centering
\includegraphics[scale=1]{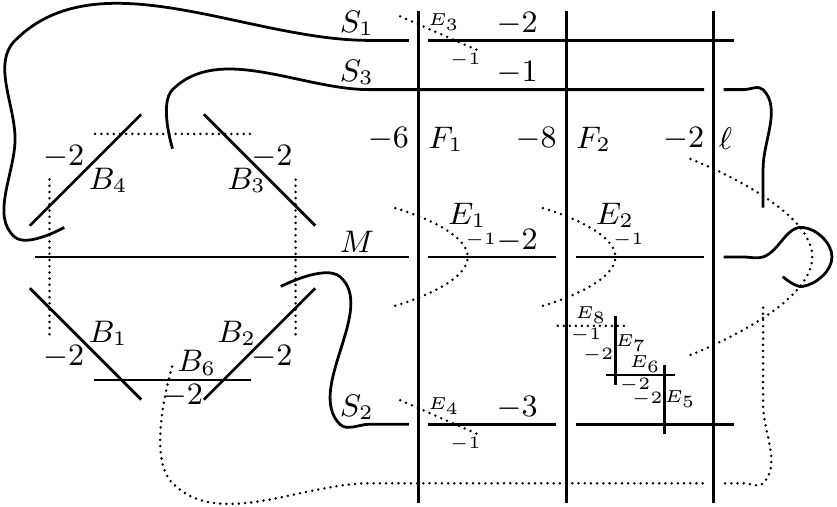}
\caption{A rational surface $Z = E(1) \sharp 8\overline{\mathbb{CP}^2}$}
\label{figure:Z-2}
\end{figure}

%\clearpage

%%%%%%%%%%%%%%%%%%%%%%%%%%%%%%%%%%%%%%%%%%%%%%%%%%%%%%%%%%%%%%%%%%%%%
\providecommand{\bysame}{\leavevmode\hbox to3em{\hrulefill}\thinspace}

\end{document}